\documentclass[12pt]{amsart}
\usepackage{mathptmx}
\usepackage{amsmath,amsfonts,amssymb}
\usepackage{mathtools}
\usepackage{mathrsfs}
\DeclareMathAlphabet{\mathscrbf}{OMS}{mdugm}{b}{n}
\usepackage[english]{babel}
\usepackage{epsfig}
\usepackage[all]{xy}
\usepackage{graphicx}
\usepackage{latexsym}
\usepackage{verbatim}
\usepackage{enumerate}
\usepackage{enumitem}
\usepackage[svgnames]{xcolor}
\usepackage{ulem}
\newtagform{orange}{\color{orange}(}{)}
\usepackage{cancel}
\usepackage{tabularx}
\usepackage{caption}
\def\HYPER{\relax}

\ifdefined\HYPER
\usepackage[hidelinks, pdfstartview=FitB, pdfpagemode=UseNone]{hyperref}
\else
\renewcommand{\href}[2]{\relax}
\renewcommand{\url}[1]{#1}
\fi

\allowdisplaybreaks
\mathtoolsset{showonlyrefs,showmanualtags}
\usepackage{thmtools,thm-restate}

\newcommand{\D}{\mathbb D}

\newcommand{\C}{\mathbb C}
\newcommand{\N}{\mathbb N}

\newcommand{\Aut}{{\sf Aut}}
\renewcommand{\H}{{\mathcal{H}}}

\def\eps{\varepsilon}

\def\dD{\mathop{{\rm d}_\D}}

\newcommand{\mcite}[1]{\csname b@#1\endcsname}
\newtheoremstyle{break}
{8pt}{8pt}%
{\itshape}{}%
{\bfseries}{}%
{\newline}{}%
\theoremstyle{break}

\theoremstyle{break}
\newtheorem{theorem}{Theorem}[section]
\newtheorem*{theorem*}{Theorem}
\newtheorem{lemma}[theorem]{Lemma}
\newtheorem*{lemma*}{Lemma}
\newtheorem{proposition}[theorem]{Proposition}

\theoremstyle{break}
\newtheorem{theoremliterature}{Theorem}

\theoremstyle{break}

\theoremstyle{remark}
\newtheorem{remark}{Remark}[section]

\numberwithin{equation}{section}

\definecolor{dgreen}{rgb}{0,0.5,0}

\newcommand{\hide}[1]{}

\author[D.~Kraus]{Daniela Kraus}
\address{D. Kraus: Department of Mathematics, University of W\"urzburg, Emil Fischer Strasse 40, 97074, W\"urzburg, Germany.} \email{daniela.kraus@uni-wuerzburg.de}

\author[A.~Moucha]{Annika Moucha$^\dag$}
\address{A. Moucha: Department of Mathematics, University of W\"urzburg, Emil Fischer Strasse 40, 97074, W\"urzburg, Germany.} \email{annika.moucha@uni-wuerzburg.de}

\author[O.~Roth]{Oliver Roth}
\address{O. Roth: Department of Mathematics, University of W\"urzburg, Emil Fischer Strasse 40, 97074, W\"urzburg, Germany.} \email{oliver.roth@uni-wuerzburg.de}

\subjclass[2020]{Primary 30J10, 30D05}
\keywords{indestructible Blaschke products, maximal Blaschke products, iterated function system}
\thanks{$^\dag\,$Supported by the Alexander von Humboldt Stiftung}

\title{Stability of Blaschke products under forward iteration}

\begin{document}

\begin{abstract}
  Forward iteration of holomorphic self-maps generalizes the iteration of a
  single function in a natural way. This framework arises in complex dynamics,
  for instance in the study of wandering domains and in seeking suitable
  extensions of the Denjoy--Wolff theorem. Here, we consider forward iteration of Blaschke products. We prove that 
  the classes of indestructible and maximal Blaschke products are stable under forward iteration.
\end{abstract}
	
    \maketitle

\section{Introduction}
We identify two classes of Blaschke products that are stable under forward iteration and settle two open problems posed in \cite{ferreiraNoteForwardIteration2023} and \cite{krausMaximalBlaschkeProducts2013}.

 \medskip
 
 Let $\D := \{ z \in \C : |z|<1 \}$ denote the open unit disk, and let $\mathcal H$ be the set of holomorphic self-maps of $\D$. The class $\mathcal H$ is closed under composition. For a sequence $(f_n) \subseteq \mathcal H$, the
 \emph{forward  iterates} of $(f_n)$ are defined by
\[
	F_n := f_n \circ \dots \circ f_1 .
\]
While the dynamics of iterating a single function $f \in \mathcal H$ are well understood through the Denjoy--Wolff theorem, the behavior of the forward iterates $(F_n)$ of a sequence $(f_n)$ is considerably more subtle. This has motivated numerous works on forward iteration and related dynamical systems; see, for instance, \cite{abateRandomIterationHyperbolic2022a,abate2025iteratedfunctionsystemsholomorphic,beniniClassifyingSimplyConnected2022,christodoulouNoteBoundaryDynamics2025a,christodoulouStabilityDenjoyWolffTheorem2021a,ferreiraNoteForwardIteration2023,FerreiraNicolau2025,GouezelKarlsson2020,JacquesShort2022}.

\medskip

Suppose the forward iterates of a sequence $(f_n) \subseteq \mathcal H$ converge pointwise (equivalently, uniformly on compact subsets of $\D$) to a limit $F \in \mathcal H$. The following theorem characterizes when $F$ is nonconstant. We recall that the hyperbolic distortion of a function $f \in \mathcal H$ is defined by
\[
	D_h f(z) := \frac{(1-|z|^2)\,|f'(z)|}{1-|f(z)|^2}, \quad z \in \D.
\]

\begin{theoremliterature}[{Benini--Evdoridou--Fagella--Rippon--Stallard~\cite[Th.~2.1]{beniniClassifyingSimplyConnected2022}}; Abate--Short~{\cite[Th.~E]{abate2025iteratedfunctionsystemsholomorphic}}]\label{thm:BeniniEtAl}
	Let $(f_n)$ be a sequence in $\mathcal H$. Suppose that each $f_n$ is nonconstant and that the forward iterates $F_n = f_n \circ \dots \circ f_1$ converge to a limit $F \in \mathcal H$. Then $F$ is nonconstant if and only if
	\begin{equation}\label{eq:IterationNonConstantCondition}
		\sum_{n \ge 1} \big(1 - D_hf_n(a)\big) < \infty, \qquad 
			\end{equation}
	for one -- and hence all -- $a \in \D$.
\end{theoremliterature}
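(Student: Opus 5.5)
The idea is to reduce the statement to a telescoping product of hyperbolic distortions, evaluated along the orbit of a base point. Fix $a\in\D$ and set $a_0:=a$, $a_n:=F_n(a)$. By the chain rule,
\[
D_hF_n(a)=\prod_{k=1}^n D_hf_k(a_{k-1}) .
\]
Every factor lies in $(0,1]$: it is nonzero because $f_k$ is nonconstant (after possibly moving $a$ slightly off the discrete zero set of the derivatives, which the ``one -- hence all'' clause lets us do), and at most $1$ by Schwarz--Pick. Since $F_n\to F$ locally uniformly, Weierstrass gives $F_n'(a)\to F'(a)$ and $F_n(a)\to F(a)\in\D$, so $D_hF_n(a)\to D_hF(a)$. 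Thus $F$ is nonconstant iff $D_hF(a)>0$ for a suitable $a$, and this happens iff the infinite product converges to a positive limit. That in turn is equivalent to
\[
\sum_k\bigl(1-D_hf_k(a_{k-1})\bigr)<\infty .
\]
For the converse direction we also need: if $F$ is nonconstant, then $F'(a)\neq0$ for generic $a$.

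The remaining step is to replace the moving points $a_{k-1}$ by a fixed point, i.e.\ to compare $1-D_hf_k(a_{k-1})$ with $1-D_hf_k(b)$ for fixed $b$. In the nonconstant-limit case this is easy, because the orbit $a_{k}$ converges to $F(a)\in\D$ and so stays in a compact set $K$. To handle it I would use the standard distortion estimate
\[
1-D_hf(z)\asymp_{K} 1-D_hf(w)\quad\text{for } z,w\in K \text{ and all } f\in\mathcal H ,
\]
which follows because $1-D_hf$ is comparable to the squared "non-automorphy" of $f$: in Beardon--Minda's hyperbolic difference-quotient language, $\log(1/D_hf)$ is a hyperbolically Lipschitz quantity up to bounded factors. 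Concretely, I would normalize $f$ by automorphisms so that $z=0=f(z)$, write $f(\zeta)=\zeta g(\zeta)$ with $g\in\mathcal H\cup$ unimodular constants, and use Schwarz--Pick for $g$. This gives $1-|g(w)|\le C_K(1-|g(0)|)$ and the reverse, together with $1-D_hf(w)\asymp 1-|g(w)|$ uniformly on compacta.

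This comparability lemma is the main obstacle. Once it is in place, both directions follow. If the sum converges at $a$, the orbit still has to be shown to stay in a compact set. For that, $d_\D(a_k,a_{k+1})$ should be summable, controlled by $1-D_hf_{k+1}$ at nearby points, but I would sidestep this: $F_n\to F\in\mathcal H$ already gives $a_k\to F(a)\in\D$, so the orbit is bounded in all cases. Comparability on $K=\{a_k\}\cup\{F(a)\}\cup\{b\}$ then transfers convergence between $\sum(1-D_hf_k(a_{k-1}))$ and $\sum(1-D_hf_k(b))$ for any $b\in\D$, which yields both the equivalence and the ``one, hence all'' statement.
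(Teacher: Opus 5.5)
Your proof is correct. The core step takes a different route from the paper, while the transfer step is the same.

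\emph{Transfer step.} This is exactly the paper's reduction to the normalized case. The paper sets $\widetilde f_n=T_{F_n(a)}\circ f_n\circ T_{F_{n-1}(a)}$, so that $D_h\widetilde f_n(0)=D_hf_n(F_{n-1}(a))$. It then compares this with $D_hf_n(a)$ via the cited estimate $e^{-2\dD(a,b)}\le (1-D_hf(b))/(1-D_hf(a))\le e^{2\dD(a,b)}$, using, as you do, that $F_{n-1}(a)\to F(a)\in\D$. Your normalization $f(\zeta)=\zeta g(\zeta)$ proves a version of this estimate without real difficulty, so the step you call the main obstacle is routine. Put $s=|g(w)|$ and $t=|w|$. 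Schwarz--Pick for $g$ gives $\bigl||g(w)+wg'(w)|-s\bigr|\le t(1-s^2)/(1-t^2)$. Hence $\frac{(1-t)(1-s)}{1+ts}\le 1-D_hf(w)\le \frac{(1+t)(1-s)}{1-ts}$. A second application of Schwarz--Pick to $g$ gives $1-s\asymp_t 1-|g(0)|=1-D_hf(0)$. Your heuristic that $\log(1/D_hf)$ is hyperbolically Lipschitz is false near critical points of $f$, but you never use it.

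\emph{Core step.} The paper works at the normalized point $0$. It uses the Schwarz--Pick monotonicity of $|F_n'|/(1-|F_n|^2)$ and the factorization $F=H_N\circ F_N$ of Remark~\ref{rem:factorization}, with $\prod_{j>N}|f_j'(0)|=|H_N'(0)|$. Since $0$ may be a critical point of $F$, that route must control the order of vanishing at $0$ (the mechanism of Lemma~\ref{lem:OrderOfZeroEventuallyConstant}) and must know that $H_N'(0)\neq0$.

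You instead take the base point $a$ off the countable set $\bigcup_n\{F_n'=0\}\cup\{F'=0\}$. This choice is legitimate because your comparability holds for every fixed $b\in\D$. It is not licensed by the ``one -- hence all'' clause, which is part of what you are proving, so you should rephrase that justification. With this choice every factor of $D_hF(a)=\prod_k D_hf_k(F_{k-1}(a))$ lies in $(0,1]$. Both implications then reduce to the elementary fact that such a product is positive iff $\sum_k(1-x_k)<\infty$.

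\emph{Comparison.} Your route is shorter and self-contained, and it needs neither the factorization nor any bookkeeping of critical points. The paper's route stays at a single normalized point. It also reuses the same Schwarz--Pick monotonicity and $H_N\circ F_N$ mechanism that appear in Lemma~\ref{lem:OrderOfZeroEventuallyConstant}, Proposition~\ref{prop:Surjective} and the maximal Blaschke product argument.
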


\begin{remark}\label{rem:GeneralizationOfForwardResult}
In \cite[Th.~2.1]{beniniClassifyingSimplyConnected2022}, the result is established in the normalized case $f_n(0)=0$ for all $n \in \N$. The general case, as well as an extension to maps on arbitrary hyperbolic Riemann surfaces, was obtained in \cite[Th.~E]{abate2025iteratedfunctionsystemsholomorphic}.  
For the unit disk, the general case can also be deduced from the normalized one by setting

\begin{equation} \label{eq:Normalization}
\widetilde{f_n} := T_{F_n(a)} \circ f_n \circ T_{F_{n-1}(a)},
\end{equation}
where $F_0 := \mathrm{id}$ and 
\[
T_w(z) \textcolor{blue}{:}= \frac{w-z}{1-\overline{w}z}, \qquad w \in \D\,, 
\]
denotes the involutive automorphism of $\D$ with $T_w(0) = w$.  
For completeness, a short proof of Theorem \ref{thm:BeniniEtAl}, based  on the infinitesimal form of the Schwarz--Pick lemma, is given in Section~\ref{sec:TheoremA}.
\end{remark}

Theorem \ref{thm:BeniniEtAl} motivates the study of properties of holomorphic
self-maps that are preserved under forward iteration. We call a subset
$\mathcal{M} \subseteq \H$  \emph{stable under forward iteration} if, for any
sequence in~$\mathcal{M}$ whose forward iterates converge to a nonconstant
function $F \in \mathcal{H}$, the limit $F$ also lies
in~$\mathcal{M}$. Suppose $\mathcal{M}$ contains the identity
  map.  Then $\mathcal{M}$ can only be stable under forward iteration if it is a (composition) semigroup,  but this condition is not sufficient. Given their ubiquity in the theory of bounded holomorphic functions, stability under forward iteration is of particular interest for inner functions and Blaschke products. 
      Recall that $f \in \H$ is an \emph{inner function} if its radial limits exist and are unimodular almost everywhere on $\partial\D$ with respect to Lebesgue measure.
         A \emph{Blaschke product} is an inner function $B$ of the form\footnote{We adopt the convention $|z_n|/z_n := 1$ whenever $z_n = 0$.}
\begin{equation}
	B(z) = \eta \prod_{n=1}^N \frac{|z_n|}{z_n} \frac{z_n - z}{1 - \overline{z_n} z},
\end{equation}
where $\eta \in \partial \D$, $N \in \N \cup \{\infty\}$, and the sequence $(z_n) \subseteq \D$ satisfies the Blaschke condition
\begin{equation}
	\sum_{n=1}^N (1 - |z_n|) < \infty.
\end{equation}

It was proved in \cite{Ryff1966} that inner functions form a composition
semigroup and G. Ferreira \cite{ferreiraNoteForwardIteration2023}
 recently established their stability under forward iteration.

\medskip

Forward iteration of Blaschke products requires additional considerations. Frostman's theorem~\cite[Ch.~II, Th.~6.4]{garnettBoundedAnalyticFunctions2007} states that every inner function can be written as ${T \circ B}$, where $T$
belongs to the group $\Aut(\D)$ of conformal automorphisms  of the unit disk and $B$ is a Blaschke product. 
This motivates the class of \emph{indestructible Blaschke products}: a Blaschke product $B$ is indestructible if $T \circ B$ is again a Blaschke product for every $T \in \Aut(\D)$. Indestructible Blaschke products first appeared in the work of Heins \cite{Heins1953,Heins1955}, and their systematic study was initiated in \cite{mclaughlinExceptionalSetsInner1972}. Subsequent work has clarified various structural properties;  see e.g.~\cite{Akeroyd2016,Akeroyd2022,Bishop1993,krausCompositionDecompositionIndestructible2013, Morse1980,rossIndestructibleBlaschkeProducts2008}.

\medskip

Regarding forward iteration of finite Blaschke products,  Ferreira ~\cite{ferreiraNoteForwardIteration2023} established the following.

        	\begin{theoremliterature}[Ferreira \cite{ferreiraNoteForwardIteration2023}] \label{thm:Ferreira}
          Let $(b_n)$ be a sequence of finite Blaschke products and suppose that the forward iterates ${B_n:=b_n \circ \cdots \circ b_1}$ converge to a nonconstant limit $B \in \mathcal{H}$. Then $B$ is a Blaschke product.
          	\end{theoremliterature}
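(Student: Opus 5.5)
The plan is to split $B$ as a tail limit composed with $B_n$, then bound the singular inner factor of $B$ by the derivative at $0$ of that tail limit, which tends to $1$.

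\textbf{Normalization.} First I reduce to the case $b_n(0)=0$ for all $n$, using the normalization \eqref{eq:Normalization} with $a=0$. Conjugating by the automorphisms $T_{B_n(0)}$ keeps each $b_n$ a finite Blaschke product. Since $B_n(0)\to B(0)\in\D$, it only changes the limit by a post-composition with an automorphism, which does not affect being a Blaschke product. So assume $B_n(0)=0$ and $B(0)=0$. Since $B$ is nonconstant, Theorem~\ref{thm:BeniniEtAl} gives $\sum_k (1-|b_k'(0)|)<\infty$, hence
\[
\prod_{k>n}|b_k'(0)|\to 1 \qquad (n\to\infty).
\]
I also use Ferreira's stability result for inner functions quoted above, so $B$ is inner.

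\textbf{Tail limits.} Fix $n$ and put $G_{m,n}:=b_m\circ\cdots\circ b_{n+1}$ for $m>n$. Then $G_{m,n}\circ B_n=B_m\to B$. Since $B_n$ is a finite Blaschke product, it maps $\D$ onto $\D$, so $G_{m,n}$ converges pointwise on $\D$ as $m\to\infty$. By normality (Vitali) this convergence is locally uniform, to some $G_n\in\H$ with
\[
B=G_n\circ B_n,\qquad G_n(0)=0,\qquad |G_n'(0)|=\prod_{k>n}|b_k'(0)| .
\]
Moreover $G_n$ is inner. One way to see this is to apply the stability of inner functions to the sequence $b_{n+1},b_{n+2},\dots$, whose limit $G_n$ is nonconstant because $|G_n'(0)|>0$ for large $n$.

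\textbf{Singular mass estimate.} Write the canonical factorization $G_n=\mathcal B_n\, S_{\sigma_n}$ with $\mathcal B_n$ a Blaschke product and $S_{\sigma_n}$ the singular inner function of a positive singular measure $\sigma_n$. Since $G_n(0)=0$, we have $G_n(z)=zH_n(z)$ and $|G_n'(0)|=|H_n(0)|\le |S_{\sigma_n}(0)|=e^{-\|\sigma_n\|}$. Hence
\[
\|\sigma_n\|\le -\log|G_n'(0)|\to 0 .
\]
Next I transport the factorization through $B_n$:
\[
B=(\mathcal B_n\circ B_n)\,(S_{\sigma_n}\circ B_n).
\]
Two facts are needed here.
\begin{enumerate}[label=(\roman*)]
\item A Blaschke product composed with a finite Blaschke product is again a Blaschke product. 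This follows, for instance, from Frostman's criterion $\lim_{r\to1}\int\log|\cdot|(r\zeta)\,|d\zeta|=0$ together with the fact that $B_n$ extends analytically across $\partial\D$ as a finite covering of the circle.
\item $S_{\sigma_n}\circ B_n$ is a zero-free inner function. Its singular measure $\tau_n$ satisfies $\|\tau_n\|=-\log|S_{\sigma_n}(B_n(0))|=\|\sigma_n\|$, because $-\log|S_{\sigma_n}\circ B_n|$ is the Poisson integral of $\tau_n$ and $B_n(0)=0$.
\end{enumerate}
Consequently the singular factor of $B$ has total mass at most $\|\sigma_n\|$ for every $n$. Letting $n\to\infty$ shows this mass is $0$, so $B$ is a Blaschke product.

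\textbf{Main obstacle.} I expect the main difficulty to be justifying (i) and (ii) rigorously: that composing with a finite Blaschke product neither creates nor destroys singular mass beyond the stated identity. I would handle both via harmonic measure. For the singular factor, use that $-\log|S\circ B_n|$ is a positive harmonic function with zero boundary values almost everywhere, whose representing measure is singular; here one uses that $B_n$ is a local diffeomorphism of $\partial\D$, so it pulls back null sets to null sets. For the Blaschke factor, use the local boundary behaviour of $B_n$ together with Frostman's criterion.
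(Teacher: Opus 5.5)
There is a genuine gap in your normalization step. Your claim that post-composition with a disk automorphism ``does not affect being a Blaschke product'' is false. By Frostman's theorem, for a singular inner function $S$ the function $\Phi:=T_a\circ S$ is a Blaschke product for all $a\in\D$ outside a set of capacity zero, while $T_a\circ\Phi=S$ is not. This is exactly the destructible/indestructible dichotomy the paper is about. Normalizing via \eqref{eq:Normalization} with $a=0$ produces the limit $T_{B(0)}\circ B\circ T_0$, so everything after that step only proves that $T_{B(0)}\circ B$ is a Blaschke product. Normalizing at other base points $a$ does not help by itself: it shows that $T_c\circ B$ is Blaschke for every $c\in B(\D)$, which says nothing about $c=0$ unless you already know that $0\in B(\D)$.

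The missing ingredient is a zero of $B$. Since finite Blaschke products map $\D$ onto $\D$, Proposition~\ref{prop:Surjective} gives some $p\in\D$ with $B(p)=0$. Normalize with $a=p$ in \eqref{eq:Normalization}; the normalized limit is then $T_0\circ B\circ T_p=-B\circ T_p$, and pre-composition with an automorphism and multiplication by $-1$ do preserve Blaschke products.

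Alternatively, you can drop the normalization and run your mass estimate at a zero $q_n$ of the tail limit $G_n$. Along subsequences $G_n$ tends to an automorphism, so by Hurwitz such zeros exist with $q_n\to q\in\D$. Then $D_hG_n(q_n)\le|S_{\sigma_n}(q_n)|$ and $D_hG_n(q_n)\to1$, and Harnack's inequality gives $-\log|S_{\sigma_n}(B_n(0))|\to0$; this quantity is the singular mass of $B$. This is the same point at which the paper needs surjectivity in its proof of Theorem~\ref{thm:IterationOfIBP}.

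With this repair, the rest of your argument is correct: the tail factorization, the bound $\|\sigma_n\|\le-\log|G_n'(0)|$, and facts (i) and (ii). It gives a genuinely different, more elementary measure-theoretic route than the paper's. The paper obtains the statement as a special case of Theorem~\ref{thm:IterationOfMBP}, where maximality passes to the limit through \eqref{eq:SchwarzPickMBPDef} and then (P5) applies, or of Theorem~\ref{thm:IterationOfIBP}, via McLaughlin's criterion. The price of your route is that it yields only ``Blaschke'', not ``maximal'' or ``indestructible''.
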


                It is instructive to compare Theorem \ref{thm:Ferreira} with the classical result of Carath\'eodory \cite[p.~13-14]{CaratheodoryII} that every function in $\mathcal{H}$ is the locally uniform limit of a sequence of finite Blaschke products. 
This indicates that forward iteration preserves structural properties more effectively than general convergence.

\medskip

Although Theorem~\ref{thm:Ferreira} assumes finite Blaschke products, it concludes that the limit is a (possibly infinite) Blaschke product, leading naturally to the following considerations. Noting that the composition of two indestructible Blaschke products is again indestructible \cite[Theorem 1.1]{krausCompositionDecompositionIndestructible2013}, Ferreira \cite[p.~8]{ferreiraNoteForwardIteration2023} observed that the proof of Theorem~\ref{thm:Ferreira} remains valid under the weaker assumption that $(b_n)$ is a sequence of indestructible Blaschke products. He then posed the question of whether the limit is itself indestructible. Our first result answers this question affirmatively and furthermore establishes the  converse.

          	\begin{restatable}{theorem}{IterationIBP}
	\label{thm:IterationOfIBP}
		Let $(b_n)$ be a sequence in $\H$ and suppose the forward iterates $B_n:=b_n \circ \cdots \circ b_1$ converge to a nonconstant limit $B \in \mathcal{H}$.  Then the following are equivalent:
		\begin{enumerate}[label={\rm (\alph*)}, ref={\hbox{(\alph*)}}, left=0em]
			\item \label{item:IBPsequence} Each  $b_n$ is an indestructible Blaschke product.
			\item \label{item:IBPlimit} $B$ is an indestructible Blaschke product.
		\end{enumerate}
              \end{restatable}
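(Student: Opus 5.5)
We prove the two implications separately. Both rest on two facts: precomposition or postcomposition of a Blaschke product with an automorphism preserves the relevant class, and forward iterates can be "conjugated" by a fixed automorphism.

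\emph{(a) $\Rightarrow$ (b).} Fix $T\in\Aut(\D)$. Define $c_1:=T\circ b_1$ and $c_n:=T\circ b_n\circ T^{-1}$ for $n\ge 2$. The forward iterates of $(c_n)$ then telescope to $C_n=T\circ B_n$, which converge to the nonconstant limit $T\circ B$.

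Each $c_n$ is an indestructible Blaschke product, for two reasons:
\begin{itemize}
\item[(i)] For a Blaschke product $b$, $b\circ T^{-1}$ is again a Blaschke product: its zeros are the images of those of $b$, and no singular inner factor appears since $T^{-1}$ extends to a diffeomorphism of $\partial\D$.
\item[(ii)] For $S\in\Aut(\D)$, $S\circ(T\circ b_n\circ T^{-1})=((S\circ T)\circ b_n)\circ T^{-1}$ is a Blaschke product by (i) and the indestructibility of $b_n$.
\end{itemize}
By Ferreira's observation, the proof of Theorem~\ref{thm:Ferreira} works verbatim for sequences of indestructible Blaschke products. Hence $T\circ B$ is a Blaschke product. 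Since $T$ was arbitrary, $B$ is indestructible.

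\emph{(b) $\Rightarrow$ (a).} First we split off tails. Fix $n$ and set $G_{m,n}:=b_m\circ\dots\circ b_{n+1}$ for $m>n$, so that $B_m=G_{m,n}\circ B_n$. Since $B$ is nonconstant, $B_n$ is nonconstant for every $n$, hence open. So $G_{m,n}$ converges pointwise on the nonempty open set $B_n(\D)$. By Montel and Vitali, $G_{m,n}\to R_n$ locally uniformly on $\D$ for some $R_n\in\H$, and
\[
B=R_n\circ B_n,
\]
with $R_n$ nonconstant.

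Next we invoke the decomposition half of \cite{krausCompositionDecompositionIndestructible2013}: if $f,g\in\H$ are nonconstant and $f\circ g$ is an indestructible Blaschke product, then both $f$ and $g$ are indestructible Blaschke products. Applied to $B=R_n\circ B_n$, this shows that every $B_n$ is an indestructible Blaschke product. Applying it once more to $B_n=b_n\circ B_{n-1}$ (with $B_0=\mathrm{id}$) gives that each $b_n$ is an indestructible Blaschke product.

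The main obstacle is the decomposition step, if it is not available in exactly this form. In that case we would prove it directly in two steps.

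First, $f\circ g$ inner forces $g$ and $f$ to be inner. For $g$: at a.e.\ $\zeta$ where $g$ has radial limit of modulus $<1$, the function $f\circ g$ has radial limit in $f(\D)\subset\D$. For $f$: use Lindelöf's theorem together with the fact that the inner function $g$ maps a.e.\ radius to curves ending at points where $f$ has unimodular nontangential limit; here one uses that $g$ pushes Lebesgue measure to an absolutely continuous measure (Löwner's lemma).

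Second, we rule out singular inner factors. Use the criterion that an inner function $u$ is a Blaschke product iff $\int\log|u(r\zeta)|\,|d\zeta|\to 0$ as $r\to 1$, in Frostman's form. Combine it with the subordination inequality for $\log|S\circ f|\circ g$ along circles to transfer the Blaschke property from $S\circ f\circ g$ to $S\circ f$ and to $S\circ g$, for each $S\in\Aut(\D)$. The left factor $f$ is the delicate case, since passing from $S\circ f\circ g$ back to $S\circ f$ requires reversing the composition with $g$.
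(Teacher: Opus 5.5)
Your proof of (b)$\Rightarrow$(a) is the paper's argument. It combines the factorization $B=R_n\circ B_n$ (Remark~\ref{rem:factorization}) with the decomposition theorem \cite[Theorem~1.2]{krausCompositionDecompositionIndestructible2013}, which exists in exactly the form you need. So you can drop the fallback sketch; its left-factor case is left open anyway.

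Your proof of (a)$\Rightarrow$(b) is correct but takes a genuinely different route from the paper.

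\emph{The paper's route.} It works directly with $B$. It invokes Ferreira's theorem that $B$ is inner and then verifies McLaughlin's two conditions (Theorem~\ref{thm:CharacterizationOfIBP}) in the limit. This requires three separate ingredients:
\begin{itemize}
\item the surjectivity of $B$ (Proposition~\ref{prop:Surjective});
\item the convergence of the zero-set products (Lemma~\ref{lem:ZeroSetInclusion});
\item the eventual constancy of the vanishing order at $0$ (Lemma~\ref{lem:OrderOfZeroEventuallyConstant}).
\end{itemize}

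\emph{Your route.} You use two observations:
\begin{itemize}
\item indestructible Blaschke products are invariant under pre- and postcomposition with automorphisms;
\item replacing $b_1$ by $T\circ b_1$ and conjugating each $b_n$, $n\ge 2$, by a fixed $T$ telescopes to $C_n=T\circ B_n$.
\end{itemize}
Indestructibility of $B$ then follows formally from the mere Blaschke property of such limits, that is, from Ferreira's extension of Theorem~\ref{thm:Ferreira} to indestructible factors.

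\emph{What each approach buys.} Your argument is far shorter and shows why the stronger conclusion comes essentially for free. Applied to finite Blaschke products, Theorem~\ref{thm:Ferreira} alone already yields indestructible limits by your trick. The cost is that this whole direction rests on Ferreira's extension, which in the literature is only a remark that his proof carries over. The paper's argument avoids that dependence and in effect reproves the Blaschke property of the limit via McLaughlin's criterion. It also yields, as independent by-products, the stability of surjectivity and of the zero-set products.
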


Theorem \ref{thm:IterationOfIBP} \ref{item:IBPsequence} $\Longrightarrow$ \ref{item:IBPlimit} improves Theorem \ref{thm:Ferreira}, yielding a stronger conclusion under a weaker hypothesis. Our proof relies on the stability of inner functions under forward iteration \cite{ferreiraNoteForwardIteration2023}.

\medskip

Next, we consider whether the conclusion of Theorem~\ref{thm:Ferreira} can be strengthened under its original assumption that $(b_n)$ is a sequence of finite Blaschke products. This leads to the class of \emph{maximal Blaschke products}, which arise naturally in hyperbolic geometry and are closely linked to the Schwarz--Pick lemma. 
The idea originates in the work of Ahlfors \cite{Ahlfors1938}, Nehari \cite{Nehari1947}, and Heins \cite{Heins1962}. The crucial result -- that extremal functions in the Ahlfors--Nehari extensions of the Schwarz--Pick lemma are always Blaschke products -- was established in \cite{krausCriticalSetsBounded2013}. For recent work, see \cite{bracciNewSchwarzPickLemma2023,ivriiPrescribingInnerParts2019,ivriiCriticalStructuresInner2021,ivriiAnalyticMappingsUnit2024,ivriiAnalyticMappingsUnit2025,krausCriticalSetsBounded2013,krausCriticalPointsGauss2013,krausMaximalBlaschkeProducts2013}.
The precise definition and a short discussion of maximal Blaschke products are given in Section~\ref{sec:proofMBP}. Here, we merely note that
\[
\{\text{finite Blaschke products}\} \subsetneq \{\text{maximal Blaschke products}\} \subsetneq \{\text{indestructible Blaschke products}\},
\]
and that the composition of two maximal Blaschke products is again maximal, see \cite[Theorem 1.7]{krausMaximalBlaschkeProducts2013} and Theorem \ref{thm:MBPdecomposition} below. A~natural question is whether they remain stable under forward iteration.

\begin{restatable}{theorem}{IterationMBP}
	\label{thm:IterationOfMBP}	Let $(b_n)$ be a sequence in $\H$ and suppose the forward iterates $B_n:=b_n \circ \cdots \circ b_1$ converge to a nonconstant limit $B \in \mathcal{H}$.  Then the following are equivalent:
		\begin{enumerate}[label={\rm (\alph*)}, ref={\hbox{(\alph*)}}, left=0em]
		\item \label{item:MBPsequence} Each  $b_n$ is a maximal Blaschke product.
                \item \label{item:MBPlimit} $B$ is a maximal Blaschke product.
		\end{enumerate}
	\end{restatable}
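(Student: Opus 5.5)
For \ref{item:MBPlimit}~$\Longrightarrow$~\ref{item:MBPsequence} we factor the limit through the forward iterates and invoke the decomposition Theorem~\ref{thm:MBPdecomposition}. For \ref{item:MBPsequence}~$\Longrightarrow$~\ref{item:MBPlimit} we combine the composition theorem with the description of maximal Blaschke products through maximal conformal metrics. Concretely, we use that a Blaschke product $B$ with critical divisor $C$ is maximal iff $\lambda_B(z)=|B'(z)|/(1-|B(z)|^2)$ equals the maximal conformal pseudometric $\lambda_C$ of curvature $-4$ vanishing on $C$. Here ``vanishing on $C$'' is counted with multiplicity, and $\lambda_C$ is the maximal element among all such SK-metrics.

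\emph{Step 1 (factorization).} Fix $n$ and set $H_{n,m}:=b_m\circ\dots\circ b_{n+1}$ for $m>n$, so that $B_m=H_{n,m}\circ B_n$. Since $B=\lim B_m$ is nonconstant, no $b_k$ is constant, and hence $B_n$ is a nonconstant open map. Thus $H_{n,m}$ converges on the open set $B_n(\D)$. By Montel's theorem and Vitali's theorem, $H_{n,m}\to G_n$ locally uniformly on all of $\D$, and
\[
B=G_n\circ B_n=(G_n\circ b_n)\circ B_{n-1},
\]
where $G_n$ is nonconstant.

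\emph{Step 2 (\ref{item:MBPlimit}$\Rightarrow$\ref{item:MBPsequence}).} The decomposition half of Theorem~\ref{thm:MBPdecomposition} states that if $f\circ g$ is maximal then $f$ and $g$ are maximal. Applying it to $B=(G_n\circ b_n)\circ B_{n-1}$ shows that $G_n\circ b_n$ is maximal. Applying it once more shows that $b_n$ is maximal.

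\emph{Step 3 (\ref{item:MBPsequence}$\Rightarrow$\ref{item:MBPlimit}).} By the composition result, each $B_n$ is a maximal Blaschke product. By Theorem~\ref{thm:IterationOfIBP}, $B$ is at least an (indestructible) Blaschke product. Write $C_n$ for the critical divisor of $B_n$ and $C$ for that of $B$. From the chain rule,
\[
B_m'=H_{n,m}'(B_n)\,B_n' \quad\text{and}\quad B'=G_n'(B_n)\,B_n',
\]
so $C_n\le C_m\le C$ for $n<m$. Next we show $C=\sup_n C_n$. Let $z$ be a zero of $B'$ of order $k$, and take a small disk around $z$ containing no other critical point of $B$. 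By Hurwitz, $B_m'$ has $k$ zeros in this disk for large $m$. Any such zero $w\neq z$ would remain a critical point of $B$, because the divisors only increase, and this contradicts the choice of the disk. Hence $z$ has multiplicity $k$ in $C_m$ for large $m$.

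Since adding zeros only lowers the maximal metric, we get $\lambda_{C}\le\lambda_{C_n}=\lambda_{B_n}$. Moreover $\lambda_{B_n}\to\lambda_B$ locally uniformly, because $B_n\to B$ and $B_n'\to B'$ locally uniformly. Letting $n\to\infty$ gives $\lambda_C\le\lambda_B$. Conversely, $\lambda_B$ is an SK-metric of curvature $-4$ vanishing on $C$, so $\lambda_B\le\lambda_C$ by maximality. Hence $\lambda_B=\lambda_C$, and $B$ is a maximal Blaschke product.

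The main obstacle is Step~3: we must justify the metric characterization of maximality for infinite divisors, together with the monotonicity $C\ge C'\Rightarrow\lambda_C\le\lambda_{C'}$. The divisor identification $C=\sup C_n$ is the delicate point, and it is handled by the Hurwitz-plus-monotonicity argument above.
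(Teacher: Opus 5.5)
Your proof is correct and follows the paper's route. You get \ref{item:MBPlimit}$\Rightarrow$\ref{item:MBPsequence} by factoring $B$ through the forward iterates (Remark~\ref{rem:factorization}) and applying Theorem~\ref{thm:MBPdecomposition} twice, and \ref{item:MBPsequence}$\Rightarrow$\ref{item:MBPlimit} by the sandwich $\lambda_B\le\lambda_M\le\lambda_{B_n}\to\lambda_B$ with $M$ maximal for $\mathscr{C}_B$. Your $\lambda_C$ is just $\lambda_M$, so the ``obstacle'' you flag follows at once from \eqref{eq:SchwarzPickMBPDef}, (P1), (P2) and Theorem~\ref{th:NehariSchwarz}.

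Only $C_n\le C$ enters the sandwich, so your Hurwitz identification $C=\sup_n C_n$ is never used. The appeal to Theorem~\ref{thm:IterationOfIBP} is also superfluous, and it imports Ferreira's inner-function stability, which the paper deliberately avoids. Once $\lambda_B=\lambda_M$ and $\mathscr{C}_B=C$, $B$ is maximal by definition and hence a Blaschke product by (P5).
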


        Since finite Blaschke products are maximal, the implication \ref{item:MBPsequence} $\Longrightarrow$ \ref{item:MBPlimit} of Theorem \ref{thm:IterationOfMBP} shows that every nonconstant limit of  forward iterates of finite Blaschke products is a maximal Blaschke product. This strengthens Theorem \ref{thm:Ferreira} (which ensured such limits are `only' Blaschke products) and refines Theorem \ref{thm:IterationOfIBP} (which established their indestructibility).
     The proof of Theorem~\ref{thm:IterationOfMBP} is entirely different from that of Theorem~\ref{thm:IterationOfIBP} and does not  rely on the stability of inner functions under forward iteration.

A key step in proving the implication \ref{item:MBPlimit} $\Longrightarrow$ \ref{item:MBPsequence} of Theorem \ref{thm:IterationOfMBP} relies on the following result, which is of independent interest. 

\begin{theorem}\label{thm:MBPdecomposition}
  Let $A_1,A_2\in\mathcal{H}$ and let $A:=A_1\circ A_2$ . Then the following are equivalent:
  \begin{enumerate}[label={\rm (\alph*)}, ref={\hbox{(\alph*)}}, left=0em]
  \item \label{item:MBPComposition} $A_1$ and $A_2$ are maximal Blaschke products.
  \item \label{item:MBPComposite} $A$ is a maximal Blaschke product.
		\end{enumerate}
                
\end{theorem}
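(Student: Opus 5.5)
The plan is to work with the pseudometric characterization of maximal Blaschke products from Section~\ref{sec:proofMBP}. Write $\lambda_\D(w)|dw| = |dw|/(1-|w|^2)$ and $f^*\lambda_\D = \lambda_\D(f)\,|f'|$. I will use three facts, which I expect to be recalled in Section~\ref{sec:proofMBP} from \cite{krausCriticalSetsBounded2013,krausMaximalBlaschkeProducts2013}:
\begin{enumerate}[label={\rm (\roman*)}, left=0em]
\item For every admissible critical set $C$ there is a maximal Blaschke product $g$ with critical set $C$. Moreover $g^*\lambda_\D = \lambda_C$, where $\lambda_C$ is the largest conformal pseudometric of curvature $\le -1$ vanishing (with multiplicity) on $C$.
\item $f$ is maximal if and only if $f^*\lambda_\D = \lambda_{C(f)}$, where $C(f)$ denotes the critical set of $f$.
\item $f^*\lambda_\D = h^*\lambda_\D$ forces $f = T\circ h$ for some $T \in \Aut(\D)$.
\end{enumerate}
Throughout, $C(A) = C(A_2) \cup A_2^{-1}(C(A_1))$ with multiplicities, and $A^*\lambda_\D = A_2^*\lambda_\D \cdot (D_hA_1\circ A_2)$.

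\textbf{(b)$\Rightarrow$(a), the factor $A_2$.} Let $g$ be maximal with $C(g) = C(A_2)$, so $g^*\lambda_\D \ge A_2^*\lambda_\D$. Define
\[
\rho := g^*\lambda_\D \cdot (D_hA_1\circ A_2).
\]
Its zero set contains $C(A)$ with the correct multiplicities. Away from zeros, set $a = A^*\lambda_\D$, $b = A_2^*\lambda_\D$ and $\gamma = g^*\lambda_\D$. Using $\Delta\log a = a^2$, $\Delta \log b = b^2$ and $\Delta \log\gamma = \gamma^2$, one gets
\[
\Delta\log\rho - \rho^2 = \gamma^2 + a^2 - b^2 - \gamma^2 a^2/b^2 = (\gamma^2-b^2)(1-a^2/b^2) \ge 0,
\]
because $\gamma \ge b$ and $a \le b$ (Schwarz--Pick for $A_1$). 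Hence $\rho$ has curvature $\le -1$. Since $A$ is maximal, (ii) gives $\rho \le A^*\lambda_\D$. Dividing by $D_hA_1\circ A_2$ off a discrete set yields $g^*\lambda_\D \le A_2^*\lambda_\D$, so the two are equal and $A_2 = T\circ g$ is maximal by (iii).

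\textbf{(b)$\Rightarrow$(a), the factor $A_1$.} Let $h$ be maximal with $C(h) = C(A_1)$. Then $(h\circ A_2)^*\lambda_\D$ has curvature $-1$ and vanishes on $C(A)$, so it is $\le A^*\lambda_\D$. This gives $h^*\lambda_\D \le A_1^*\lambda_\D$ on the open set $A_2(\D)$. Combined with $h^*\lambda_\D \ge A_1^*\lambda_\D$ everywhere, we get equality on an open set. Both metrics have curvature exactly $-1$ off discrete sets, so they are real-analytic there and hence equal everywhere. Then (iii) gives $A_1 = T\circ h$, which is maximal.

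\textbf{(a)$\Rightarrow$(b).} Let $\mu = \lambda_{C(A)}$; it suffices to show $\mu \le A^*\lambda_\D$. I push $\mu$ forward under $A_2$:
\[
\nu(w) := \sup_{A_2(z)=w} \frac{\mu(z)}{|A_2'(z)|}.
\]
Locally near non-critical values this is a supremum of curvature $\le -1$ metrics. Since $\mu$ vanishes on $C(A_2)$ to at least the order of $A_2'$, the quotients stay bounded near critical points of $A_2$. Since every $A_2$-preimage of $c \in C(A_1)$ lies in $C(A)$ with the right multiplicity, $\nu$ vanishes on $C(A_1)$.

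The main obstacle is justifying that $\nu$ (after upper semicontinuous regularization) is a genuine pseudometric of curvature $\le -1$, given that $A_2$ may have infinitely many preimages and critical values. Its logarithm is a sup of subharmonic-type functions, so Perron/usc-regularization arguments should apply. The key uniform bound is $\nu \le \lambda_\D$, which follows from $\mu \le \lambda_{C(A_2)} = A_2^*\lambda_\D$. Once $\nu$ is a genuine pseudometric, the maximality of $A_1$ gives $\nu \le A_1^*\lambda_\D$, and evaluating at $w = A_2(z)$ gives $\mu \le A^*\lambda_\D$. Maximal functions are Blaschke products, so $A$ is a maximal Blaschke product.
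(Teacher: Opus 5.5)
Your argument for (b)$\Rightarrow$(a) is fine. For $A_2$ it is exactly the paper's proof. Your $\rho=\gamma\,(D_hA_1\circ A_2)$ is the paper's $\sigma=\lambda_A\lambda_B/\lambda_{A_2}$, and $(\gamma^2-b^2)(1-a^2/b^2)\ge 0$ is the paper's curvature estimate. One small slip: with $|dw|/(1-|w|^2)$ and the usual Laplacian the curvature is $-4$, so read $\Delta\log a=4a^2$ etc.; this only rescales everything. For $A_1$ the paper simply cites \cite[Prop.~5.2]{krausMaximalBlaschkeProducts2013}. Your argument compares $h\circ A_2$ with $A$ to get $\lambda_h=\lambda_{A_1}$ on the open set $A_2(\D)$, then everywhere because $|f'|^2/(1-|f|^2)^2$ is real analytic. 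That is a correct self-contained replacement for the citation. You do not even need (iii): $\lambda_{A_1}=\lambda_h$ together with $C(A_1)=C(h)$ is already maximality by definition.

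The gap is (a)$\Rightarrow$(b). The paper does not prove this direction but quotes \cite[Th.~1.7]{krausMaximalBlaschkeProducts2013}, and your push-forward sketch leaves open exactly the step that carries the difficulty.

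First, for an infinite Blaschke product $A_2$ the fibre $A_2^{-1}(w)$ is infinite, and nothing guarantees a neighbourhood of $w$ on which all inverse branches are defined ($w$ may be a critical value, or a limit of critical or asymptotic values). So $\nu$ is not, on any fixed open set, a supremum of smooth metrics of curvature $\le -1$. You would have to work with Heins' SK-metrics, prove that the regularization $\nu^*$ is one, and use a version of Theorem~\ref{th:NehariSchwarz} for such non-$C^2$ metrics. That is not the version stated in the paper.

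Second, and more seriously, the zero condition does not follow. For $c\in C(A_1)$ of multiplicity $m$ you need $\nu^*(w)=O(|w-c|^m)$ as $w\to c$. Each preimage $z_j$ of $c$ only gives a bound $K_j|w-c|^m$ on a $j$-dependent neighbourhood, with no uniformity in $j$. Moreover, a preimage of a point $w$ close to $c$ need not lie near any preimage of $c$ at all, since a Blaschke product can be small away from its zeros. For such preimages your only information is $\nu\le\lambda_\D$. The pointwise fact $\nu(c)=0$ does not help, because the upper semicontinuous regularization replaces $\nu(c)$ by $\limsup_{w\to c}\nu(w)$.

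Hence the step ``maximality of $A_1$ gives $\nu\le A_1^*\lambda_\D$'' is unjustified, and as it stands the proposal does not establish (a)$\Rightarrow$(b). You need either to cite \cite[Th.~1.7]{krausMaximalBlaschkeProducts2013}, as the paper does, or to supply a genuine proof of these two points.
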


\begin{remark} \label{rem:MBPdecomposition}
Theorem \ref{thm:MBPdecomposition} is not entirely new. As noted above, the implication \ref{item:MBPComposition} $\Longrightarrow$ \ref{item:MBPComposite} is Theorem 1.7 in \cite{krausMaximalBlaschkeProducts2013}.  Proposition 5.2 in the same reference asserts that if $A_1 \circ A_2$ is a maximal Blaschke product, then $A_1$ is maximal. The question of whether $A_2$ is also maximal was posed in \cite[Probl.~5.3]{krausMaximalBlaschkeProducts2013} and is answered affirmatively here.
\end{remark}

             The paper is organized as follows. Section~\ref{sec:propertiesofforwarditeration} discusses general properties of forward iteration.
             Theorem~\ref{thm:IterationOfIBP} is proved in Section~\ref{sec:proofIBP}, while Theorems~\ref{thm:IterationOfMBP} and \ref{thm:MBPdecomposition} are proved in Section~\ref{sec:proofMBP}.        
             The proof of Theorem~\ref{thm:IterationOfMBP}
               rests on the refinement of the Schwarz--Pick lemma provided by maximal Blaschke products, see Section \ref{sec:proofMBP}, in particular \eqref{eq:SchwarzPickMBPDef}.  As a further illustration of this approach, we apply the classical Schwarz--Pick lemma to present a short proof of Theorem~\ref{thm:BeniniEtAl} in Section~\ref{sec:TheoremA}.

	\section{Properties of forward iteration}\label{sec:propertiesofforwarditeration}

For nonconstant $g \in \mathcal{H}$ and $c \in \mathbb{D}$, let $Z_g(c)$ denote the multiset\footnote{%
A multiset is a collection in which elements occur with multiplicity; see~\cite{Blizard1989} and~\cite[p.~473]{Knuth1997}.%
} 
of zeros of $g - g(c)$, counted with multiplicities. The notation $\prod_{z \in Z_g(c)} |z|$ refers to the corresponding (possibly infinite) product, with each zero repeated according to its multiplicity.

\medskip

We begin by examining the relationship between the images of the limit function $F$ and those of the forward iterates $F_n$ of a sequence $(f_n)$ in $\mathcal{H}$ at a given point $c \in \mathbb{D}$.

\begin{lemma}\label{lem:ZeroSetInclusion}
Let $(f_n)$ be a sequence in $\mathcal{H}$, and suppose the forward iterates $F_n := f_n \circ \dots \circ f_1$ converge to a nonconstant limit $F \in \mathcal{H}$. Let $c \in \mathbb{D}$, and set $Z = Z_F(c)$ as well as $Z_n = Z_{F_n}(c)$ for $n \in \mathbb{N}$. Then
\begin{subequations}
\begin{equation}\label{eq:ZeroSetInclusion}
Z_n \subseteq Z_{n+1} \quad \text{for all } n \in \mathbb{N}, 
\qquad 
Z = \bigcup_{n \ge 1} Z_n,
\end{equation}
and
\begin{equation}\label{eq:ProductLimit}
\prod_{z \in Z\setminus \{0\}} |z| = \lim_{n \to \infty} \prod_{z \in Z_n\setminus\{0\}} |z|.
\end{equation}
\end{subequations}
\end{lemma}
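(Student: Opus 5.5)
The plan is to use the composition structure of the forward iterates to get monotonicity of the multisets, and then Hurwitz's theorem to pass to the limit. First I note that every $F_n$ is nonconstant: if some $F_n$ were constant, then all $F_m$ with $m\ge n$ would be constant, and so would $F$. Hence each $f_{n+1}$ is nonconstant on the open set $F_n(\D)$, and therefore nonconstant on $\D$.

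\emph{Monotonicity.} I write $F_{n+1}=f_{n+1}\circ F_n$. Let $z$ be a zero of $F_n-F_n(c)$ of order $k$. Then $z$ is a zero of $F_{n+1}-F_{n+1}(c)$, and its order is $k\cdot \ell$, where $\ell\ge1$ is the order of $f_{n+1}-f_{n+1}(F_n(c))$ at $F_n(c)$. This is where nonconstancy of $f_{n+1}$ is used. So $Z_n\subseteq Z_{n+1}$ as multisets.

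\emph{The inclusion $Z_n\subseteq Z$.} Fix $z\in Z_n$ with multiplicity $k$ in $Z_n$. By monotonicity, $z$ has multiplicity at least $k$ in $Z_m$ for every $m\ge n$. Since $F_m-F_m(c)\to F-F(c)$ locally uniformly and $F-F(c)\not\equiv 0$, Hurwitz's theorem applies on a small disk $\Delta$ around $z$ whose boundary avoids zeros of $F-F(c)$. It shows that $F-F(c)$ has exactly as many zeros in $\Delta$ as $F_m-F_m(c)$ for large $m$, which is at least $k$. Shrinking $\Delta$, I conclude that $F(z)=F(c)$ with multiplicity at least $k$. Hence $\bigcup_n Z_n\subseteq Z$.

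\emph{The reverse inclusion $Z\subseteq\bigcup_n Z_n$.} This is the step I expect to need the most care. Let $z\in Z$ have multiplicity $k$, and choose $r>0$ so that $z$ is the only zero of $F-F(c)$ in $\overline{D(z,r)}$. By Hurwitz, $Z_m\cap D(z,r)$ has exactly $k$ elements for all large $m$. These multisets increase in $m$, so they are eventually equal to a fixed multiset $S$ with $k$ elements. Now apply Hurwitz to any smaller radius $r'<r$: eventually all $k$ points of $Z_m\cap D(z,r)$ lie in $D(z,r')$. Hence every point of $S$ lies in $\bigcap_{r'}D(z,r')=\{z\}$, so $S$ is $z$ repeated $k$ times, and $z$ occurs in $Z_m$ with full multiplicity $k$. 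This proves the second part of \eqref{eq:ZeroSetInclusion}.

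\emph{The product formula \eqref{eq:ProductLimit}.} All factors satisfy $|z|<1$, and the multisets $Z_n$ increase. So the products $\prod_{z\in Z_n\setminus\{0\}}|z|$ are nonincreasing in $n$, each is at least $\prod_{z\in Z\setminus\{0\}}|z|$, and hence they converge. Conversely, the (possibly infinite) product over the countable multiset $Z\setminus\{0\}$ is the infimum of its products over finite submultisets. By \eqref{eq:ZeroSetInclusion}, every finite submultiset is contained in some $Z_n$. Thus the limit is at most $\prod_{z\in Z\setminus\{0\}}|z|$, and equality follows. This includes the case where the product is $0$.
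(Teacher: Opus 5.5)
Your proof is correct, and for the product formula \eqref{eq:ProductLimit} it follows the paper's argument: the products over $Z_n\setminus\{0\}$ decrease and are bounded below by the product over $Z\setminus\{0\}$, and every finite piece of $Z\setminus\{0\}$ lies in some $Z_N$. Your infimum-over-finite-submultisets formulation also avoids the paper's case split between finite and infinite $Z$ and its appeal to the Blaschke condition; for \eqref{eq:ZeroSetInclusion}, which the paper takes from Ferreira's Lemma~3.1, your self-contained argument (multiplication of orders under composition plus Hurwitz's theorem) is sound.
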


\begin{proof}

Only~\eqref{eq:ProductLimit} requires proof, since~\eqref{eq:ZeroSetInclusion} follows as in~\cite[Lem.~3.1]{ferreiraNoteForwardIteration2023}.

\medskip

 If $Z$ is finite, then $Z=Z_N$ for all sufficiently large $N \in \N$ and the equality in~\eqref{eq:ProductLimit} is immediate.   Otherwise, each $Z_n$, as well as $Z$,  satisfies the  Blaschke condition,  so the products in  \eqref{eq:ProductLimit} converge.
Enumerate $Z\setminus\{0\} = (c_j)$.   By~\eqref{eq:ZeroSetInclusion}, for each $k$ there exists $N(k)$ such that 
$\{c_1, \dots, c_k\} \subseteq Z_{N(k)}\setminus\{0\}$. Then
$$ \prod_{j=1}^k |c_j| \ge \prod_{z \in Z_{N(k)}\setminus\{0\}} |z| \ge  \prod_{z \in Z\setminus\{0\}} |z| \, .$$
This implies
$$\prod_{z \in Z\setminus\{0\}} |z| =\lim \limits_{k \to \infty} \prod_{z \in Z_{N(k)}\setminus\{0\}} |z|$$
and \eqref{eq:ProductLimit} follows because $\prod \limits_{z \in Z_n\setminus\{0\}} |z|$ is monotonic. 
\end{proof}

\noindent
We denote the $k$--th coefficient in the power series expansion of a holomorphic function 
$f : \mathbb{D} \to \mathbb{C}$ about the origin by $\widehat{f}(k)$.

\begin{lemma} \label{lem:OrderOfZeroEventuallyConstant}
Let $(f_n)$ be a sequence in $\mathcal{H}$, and suppose the forward iterates $F_n := f_n \circ \dots \circ f_1$ 
converge to~a nonconstant limit $F \in \mathcal{H}$. 
Assume moreover that $F_n - F_n(0)$ has a zero of order $K_n$ at $0$ for each~$n$, 
and that $F - F(0)$ has a zero of order $K$ at $0$. 
Then the sequence $(K_n)$ is nondecreasing and eventually constant with value $K$.
\end{lemma}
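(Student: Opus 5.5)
Monotonicity comes straight from the chain rule, and the value $K$ in the limit comes from Hurwitz's theorem applied locally at $0$.

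\emph{Monotonicity.} Write $F_{n+1} = f_{n+1}\circ F_n$. Then
\[
F_{n+1}-F_{n+1}(0) = f_{n+1}(F_n) - f_{n+1}(F_n(0)).
\]
Expand $f_{n+1}$ about the point $w_n:=F_n(0)$. Let $m\ge 1$ be the order of the zero of $f_{n+1}-f_{n+1}(w_n)$ at $w_n$; this is finite because $f_{n+1}$ cannot be constant, since otherwise $F$ would be constant. Then
\[
f_{n+1}(w)-f_{n+1}(w_n) = (w-w_n)^m h(w)
\]
with $h(w_n)\neq 0$. Substituting $w=F_n(z)$ shows that $F_{n+1}-F_{n+1}(0)$ vanishes at $0$ to order exactly $mK_n$. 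Hence $K_{n+1}=mK_n\ge K_n$, so $(K_n)$ is nondecreasing. The same computation shows each $K_n$ is finite, since each $F_n$ is nonconstant.

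\emph{Eventual value $K$.} Consider $g_n:=F_n-F_n(0)$ and $g:=F-F(0)$. Since $F_n\to F$ locally uniformly, $g_n\to g$ locally uniformly, and $g\not\equiv 0$ has a zero of order exactly $K$ at $0$. Choose $r>0$ so small that $g$ has no zeros on $0<|z|\le r$. By Hurwitz's theorem (the argument principle on $|z|=r$), for all large $n$ the function $g_n$ has exactly $K$ zeros in $|z|<r$, counted with multiplicity. Since $0$ is a zero of $g_n$ of order $K_n$, this gives $K_n\le K$ for large $n$.

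\emph{Lower bound.} By monotonicity, $K_n\le K$ then holds for all $n$. A nondecreasing integer sequence bounded above is eventually constant, say $K_n=L\le K$ for $n\ge N$. For the reverse inequality, for $n\ge N$ we have $\widehat{g_n}(k)=0$ for $k<L$, and the coefficients converge: $\widehat{g_n}(k)\to\widehat{g}(k)$, for instance by the Cauchy integral formula on $|z|=r$. Hence $\widehat{g}(k)=0$ for all $k<L$, so $K\ge L$. Therefore $L=K$.

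The only step needing any care is the Hurwitz count, which supplies the upper bound $K_n\le K$. Everything else is coefficient bookkeeping, so I expect no real obstacle.
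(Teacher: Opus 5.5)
Your monotonicity argument and the Hurwitz count are correct, but the step you call ``the reverse inequality'' is not the reverse inequality. From $\widehat{g_n}(k)=0$ for $k<L$ you deduce $\widehat{g}(k)=0$ for $k<L$, i.e.\ $K\ge L$. That is the same inequality $L\le K$ that Hurwitz already gave you. The inequality that is actually missing, and which is the whole content of the lemma, is $K\le L$, i.e.\ $\widehat{g}(L)\neq 0$: the order of vanishing at $0$ must not jump up in the limit.

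Nothing you use after the monotonicity step can give this. There you only use that $(K_n)$ is nondecreasing and that $g_n\to g$ locally uniformly. The functions $g_n(z)=z^2/2+z/(2n)\to z^2/2$ satisfy both, yet $K_n=1$ for all $n$ while $K=2$, because the extra zero $-1/n$ of $g_n$ drifts into $0$. So your remark that ``everything else is coefficient bookkeeping'' misses exactly the point where the forward-iteration structure has to enter.

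The paper supplies this via Theorem~\ref{thm:BeniniEtAl}. After normalizing to $f_n(0)=0$, the stabilization $K_n=L$ for $n\ge N$ forces $f_j'(0)\neq 0$ for $j>N$. Hence $\widehat{F_n}(L)=\prod_{j=N+1}^{n}f_j'(0)\,\widehat{F_N}(L)$. Nonconstancy of $F$ gives $\prod_{j>N}|f_j'(0)|\ge\varepsilon>0$, so $|\widehat{F}(L)|\ge\varepsilon|\widehat{F_N}(L)|>0$.

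Alternatively, you can close the gap with your own Hurwitz count plus the nesting of level sets, without Theorem~\ref{thm:BeniniEtAl}:
\begin{itemize}
\item If $F_n(z_1)=F_n(0)$, then $F_{n+1}(z_1)=f_{n+1}(F_n(z_1))=f_{n+1}(F_n(0))=F_{n+1}(0)$. Inductively $g_m(z_1)=0$ for all $m\ge n$, hence $g(z_1)=0$.
\item Suppose $L<K$. For some large $n\ge N$, Hurwitz says $g_n$ has $K$ zeros in $|z|<r$, but only $L$ of them at $0$. So $g_n$ has a zero $z_1$ with $0<|z_1|<r$.
\item Nesting then gives $g(z_1)=0$, contradicting the choice of $r$.
\end{itemize}
Either way an additional argument of this kind is needed; as written, your proof only shows $\lim K_n\le K$.
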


\begin{proof}
We first assume that $f_n(0) = 0$ for all $n \in \mathbb{N}$. Then also $F_n(0) = F(0) = 0$. 
Let $j_n \in \mathbb{N}$ denote the order of the zero of $f_n$ at $0$, then
\[
K_n = j_n \cdot K_{n-1},
\]
so the sequence $(K_n)$ is nondecreasing.  
Since $\widehat{F}(K) \neq 0$, there exists $N \in \N$ such that $\widehat{F_n}(K) \neq 0$ for each $n \ge N$. This implies $K_n \le K$ for each $n \ge N$, and shows that $(K_n)$ is eventually constant with value $L \le K$. By increasing $N$ if necessary, we may assume that $K_n=L$ for each $n \ge N$.

\medskip

By Theorem~\ref{thm:BeniniEtAl}, there exists $\varepsilon > 0$ such that
\[
\prod_{j \ge n} |f_j'(0)| \ge \varepsilon \quad \text{for each } n \ge N.
\]
Note that 
\[
\widehat{F_n}(L) = \prod_{j=N+1}^n f_j'(0) \cdot \widehat{F_N}(L), \qquad n \ge N,
\]
since $\widehat{F_N}(j)=0$ for $j=0,\ldots, L-1$. Consequently
\[
|\widehat{F}(L)| = \lim_{n \to \infty} |\widehat{F_n}(L)| \ge \varepsilon |\widehat{F_N}(L)| > 0,
\]
thus $L \ge K$.
For the general case, in which the functions $f_n$ do not necessarily fix the origin, we apply the previous argument to
\begin{equation} \label{eq:normalizing}
\widetilde{f_n} := T_{F_n(0)} \circ f_n \circ T_{F_{n-1}(0)},
\end{equation}
where we set $F_0 := \mathrm{id}$ and
\[
T_w(z) := \frac{w-z}{1-\overline{w} z}, \quad w  \in \mathbb{D}.
\]
Note that the order of the zero at the origin of $F_n - F_n(0)$ then coincides with that of 
$\widetilde{F_n} := \widetilde{f_n}~\circ~\dots~\circ~\widetilde{f_1}$, and  $\widetilde{F_n} \to T_{F(0)} \circ F \circ T_0$.
\end{proof}

Next, we show that surjective functions in $\mathcal{H}$ are stable under forward iteration.

\begin{proposition}\label{prop:Surjective}
  Let $(f_n)$ be a sequence in $\mathcal{H}$, and suppose the forward iterates $F_n := f_n \circ \dots \circ f_1$ 
converge to a nonconstant limit $F \in \mathcal{H}$. 
If $f_n : \D \to \D$ is onto for each $n \in \mathbb{N}$, then the limit $F: \D \to \D$ is onto.
\end{proposition}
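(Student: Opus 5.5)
The plan is to write $F$ as a tail map composed with $F_m$, and to show that the tail map is almost an automorphism when $m$ is large. Surjectivity of $F_m$ then transfers to $F$.

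\emph{Step 1: tail maps.} For $m<n$ set $G_{m,n}:=f_n\circ\dots\circ f_{m+1}$, so that $F_n=G_{m,n}\circ F_m$. Fix $m$. By Montel's theorem, some subsequence $G_{m,n_k}$ converges locally uniformly to a map $G_m$, which is either in $\mathcal H$ or a unimodular constant. Since $F_{n_k}(0)=G_{m,n_k}(F_m(0))\to F(0)\in\D$, the limit $G_m$ is not a unimodular constant. Hence $F=\lim_k G_{m,n_k}\circ F_m=G_m\circ F_m$. Any subsequential limit will do, so $G_m$ need not be unique.

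\emph{Step 2: $G_m$ becomes an isometry.} By the chain rule for the hyperbolic distortion,
\[
D_hF_n(0)=D_hG_{m,n}(F_m(0))\cdot D_hF_m(0).
\]
Letting $n=n_k\to\infty$ gives $D_hG_m(F_m(0))=D_hF(0)/D_hF_m(0)$. Here $D_hF(0)>0$ may be assumed: if $F'(0)=0$, replace $0$ by a base point $a$ with $F'(a)\neq 0$ throughout. Since $D_hF_m(0)\to D_hF(0)$, we get $D_hG_m(F_m(0))\to 1$ as $m\to\infty$. Now normalize by
\[
H_m:=T_{F(0)}\circ G_m\circ T_{F_m(0)} .
\]
Then $H_m(0)=T_{F(0)}(G_m(F_m(0)))=T_{F(0)}(F(0))=0$, and $|H_m'(0)|=D_hG_m(F_m(0))\to 1$. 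By the Schwarz lemma and normality, every subsequential limit of $(H_m)$ is a self-map fixing $0$ with $|H'(0)|=1$, that is, a rotation. So $H_m$ approaches the set of rotations locally uniformly.

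\emph{Step 3: hitting a given value.} Let $w\in\D$ and put $w':=T_{F(0)}(w)$. Choose $r$ with $|w'|<r<1$. If $H_m(z)\neq w'$ for all $|z|\le r$ held along a subsequence, we could pass to a further subsequence with $H_m\to$ a rotation $\rho$. Hurwitz's theorem, applied to $H_m-w'$ on $|z|<r$, would then contradict the fact that $\rho$ attains $w'$ inside $|z|<r$.

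So for some large $m$ there is $u$ with $H_m(u)=w'$, i.e. $G_m(v)=w$ for $v:=T_{F_m(0)}(u)\in\D$. Since $F_m$ is a composition of surjective self-maps of $\D$, it is onto, so there is $z\in\D$ with $F_m(z)=v$. Then $F(z)=G_m(F_m(z))=w$.

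The main point needing care is Step 2: that the tail limits $G_m$ exist and factor $F$ without knowing that the full tails converge, and that their distortion at $F_m(0)$ tends to $1$. The distortion identity above handles the latter cleanly, and it is consistent with Theorem~\ref{thm:BeniniEtAl}.
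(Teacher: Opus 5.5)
Your proof is correct, but it takes a different route from the paper's.

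\textbf{The paper's route.} The paper first normalizes the whole sequence via \eqref{eq:normalizing} so that $f_n(0)=0$. It then invokes Theorem~\ref{thm:BeniniEtAl} to make the tail product $\prod_{n\ge N+1}|f_n'(0)|=|H_N'(0)|$ exceed $1-\eps$. Finally, it uses the explicit Schwarz-type bound \eqref{eq:classicalineq} with the argument principle to obtain the concrete inclusion $H_N(\D)\supseteq K_{1-3\sqrt{\eps}}(0)$.

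\textbf{Your route.} You show that the tail map is nearly an automorphism directly, from the multiplicativity of the hyperbolic distortion at a base point $a$ with $F'(a)\neq 0$. This gives $D_hG_m(F_m(a))=D_hF(a)/D_hF_m(a)\to 1$ without any appeal to Theorem~\ref{thm:BeniniEtAl}. Choosing such a base point is exactly what removes the need for that theorem: in the paper's normalization $F'(0)$ may vanish, so a ratio of derivatives at $0$ is unavailable and the tail product must be controlled another way. You then replace the quantitative estimate with a normal-families and Hurwitz compactness argument around the limiting rotations.

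\textbf{What each buys.} Your argument is more self-contained. It avoids renormalizing the entire sequence and only conjugates the tail limits by automorphisms. The paper's argument is quantitative: it shows explicitly how large a disk lies in $F(\D)$ in terms of how close the tail product is to $1$.

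A minor point: because $F_m$ is onto (indeed, merely because $F_m(\D)$ is open), $G_m$ is uniquely determined by $F=G_m\circ F_m$. So your remark that $G_m$ need not be unique is unnecessary, though harmless. Compare Remark~\ref{rem:factorization}, where the full tail sequence is shown to converge.
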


The following remark, originating in~\cite{beniniClassifyingSimplyConnected2022,ferreiraNoteForwardIteration2023}, will play a  role in the proof of Proposition~\ref{prop:Surjective} and of Theorem~\ref{thm:IterationOfIBP}. 
We state it here for later reference.

\begin{remark}\label{rem:factorization}
Let $F \in \mathcal{H}$ be the nonconstant limit function of the forward iterates of a sequence $(f_n)$ in $\mathcal{H}$.
Then, for each $N \in \mathbb{N}$, there exists a nonconstant $H_N \in \mathcal{H}$ such that $F = H_N \circ F_N$.  
Indeed, fixing $N \in \mathbb{N}$ and defining
\[
h_{N,n} := f_{N+n} \circ \cdots \circ f_{N+1}, \qquad n \in \mathbb{N},
\]
we observe that $(h_{N,n})$ forms a normal family.  
Since $F_N$ is nonconstant, and since $F_{N+n} = h_{N,n} \circ F_N$ converges locally uniformly on $\mathbb{D}$ to $F$, it follows that $(h_{N,n})$ converges locally uniformly on $\mathbb{D}$ to some $H_N \in \mathcal{H}$ with the required property.
\end{remark}

In the proof of Proposition~\ref{prop:Surjective} we write $K_R(z_0)$ for the open disk about $z_0 \in \C$ with radius $R>0$.

\begin{proof}[Proof of Proposition~\ref{prop:Surjective}]
First, we consider the normalized case that $f_n(0)=0$ for each $n \in \N$. Choose $\eps>0$ less than $1/9$. Then, in view of Theorem~\ref{thm:BeniniEtAl} there 
 exists $N \in \mathbb{N}$ such that
\begin{equation}\label{eq:nu-def}
\nu := \prod_{n \ge N+1} |f_n'(0)| > 1 - \varepsilon.
\end{equation}
Remark \ref{rem:factorization} shows that we can write 
$$ F=H_N \circ F_N$$ 
with a nonconstant $H_N \in \mathcal{H}$, $H_N(0)=0$. By the chain rule,
$$ |H_N'(0)|= \prod_{n \ge N+1} |f_n'(0)|=\nu>1-\eps \, .$$
We now make use of the following consequence of the Schwarz lemma,
\begin{equation}\label{eq:classicalineq}
|H_N(z)| \ge |z| \left(1 - \big(1 - |H_N'(0)|\big)\frac{1 + |z|}{1 - |z|}\right), \qquad |z|<|H_N'(0)|, 
\end{equation}
see,  for instance, \cite[Ch.~I, Ex.~1]{garnettBoundedAnalyticFunctions2007} or~\cite[Lem.~2.1]{ferreiraNoteForwardIteration2023}.  
This implies
$$ |H_N(z)| \ge 1-3 \sqrt{\eps} \qquad \text{ for all } |z|=1-\sqrt{\eps} \, .$$
Since $H_n(0)=0$, the argument principle implies that 
the closed curve $H_N(\partial K_{1-\sqrt{\eps}}(0))$ surrounds the origin. By
the previous estimate, it also surrounds each point in $K_{1-3
  \sqrt{\eps}}(0)$.
Thus, again by the argument principle,
\begin{equation}\label{eq:rouche}
H_N\big(K_{1 - \sqrt{\varepsilon}}(0)\big) \supseteq K_{1 - 3\sqrt{\varepsilon}}(0).
\end{equation}
Since $F_N$ is onto, we have $F_N(\mathbb{D}) = \mathbb{D}$ and thus
\begin{equation}\label{eq:ontoresult}
F(\mathbb{D}) = H_N(F_N(\mathbb{D})) = H_N(\mathbb{D}) \supseteq K_{1 - 3\sqrt{\varepsilon}}(0).
\end{equation}
As $\varepsilon > 0$ was arbitrary, it follows that $F: \D \to \D$ is onto.

\medskip

The general case, in which  not necessarily $f_n(0)=0$  for each $n \in \N$, can be reduced to the normalized case as before. Note that passing from $(f_n)$ to $(\widetilde{f}_n)$ via \eqref{eq:normalizing}, and denoting the nonconstant limit of the forward iterates $\widetilde{F}_n$ of $(\widetilde{f}_n)$ by $\widetilde{F}$, we have $\widetilde{F}=T_{F(0)} \circ F \circ T_0$. Since by the previous considerations $\widetilde{F}$ is onto, the same is true of $F$.
\end{proof}

\section{Indestructible Blaschke products -- Proof of Theorem~\ref{thm:IterationOfIBP}}\label{sec:proofIBP}

In this section we prove Theorem~\ref{thm:IterationOfIBP}, which we recall for the reader’s convenience.

\IterationIBP*

\begin{proof}[Proof of Theorem~\ref{thm:IterationOfIBP} \ref{item:IBPlimit} $\Longrightarrow$ \ref{item:IBPsequence}]
Choose $N \in \N$. Then by Remark~\ref{rem:factorization}, $B=H_N \circ B_N$  for some $H_N \in \mathcal{H}$. 
Theorem~1.2 in~\cite{krausCompositionDecompositionIndestructible2013} shows that $H_N$ and  $B_N=b_N \circ (b_{N-1} \circ \cdots \circ b_1)$ are indestructible Blaschke products, and then the same is true for $b_N$.  
\end{proof}

To prove the implication \ref{item:IBPsequence} $\Longrightarrow$ \ref{item:IBPlimit} in Theorem~\ref{thm:IterationOfIBP}, we use the following characterization of indestructible Blaschke products.
We denote by $f^{-1}(a)$ the multiset of zeros of $f-a$. If $a=f(c)$ for some $c \in \D$, then $f^{-1}(a)=Z_f(c)$.

\begin{theoremliterature}[McLaughlin~{\cite[Theorem~1]{mclaughlinExceptionalSetsInner1972}}]\label{thm:CharacterizationOfIBP}
Let $f \in \mathcal{H}$ be an inner function such that
\[
f(z) = f(0) + \widehat{f}(k) z^k + \widehat{f}(k+1) z^{k+1} + \dots \qquad (\widehat{f}(k) \neq 0).
\]
Then $f$ is an indestructible Blaschke product if and only if the following two conditions hold:
\begin{subequations}
\begin{align}
\left\vert \frac{f(0)-a}{1-\overline{a} f(0)} \right\vert &= \prod_{z \in f^{-1}(a)} |z|, \qquad a \in \mathbb{D} \setminus \{ f(0) \}, \label{eq:MLConditionForNonzeros} \\
  \frac{|\widehat{f}(k)|}{1 - |f(0)|^2} &= \prod_{z \in Z_f(0) \setminus \{0\}} |z|. \label{eq:MLConditionForZeros}
\end{align}
\end{subequations}
\end{theoremliterature}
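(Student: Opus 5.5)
The plan is to reduce everything to a single Jensen-type criterion for an inner function to be a Blaschke product, and then apply it to the inner functions $T\circ f$, $T \in \Aut(\D)$.

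\emph{Step 1 (criterion).} Let $g$ be an inner function with $g(0)\neq 0$. By the canonical factorization, $g = \eta\, B\, S_\mu$, where $|\eta|=1$, $B$ is the Blaschke product over the zeros $(z_n)$ of $g$, and $S_\mu$ is the singular inner function with singular measure $\mu\ge 0$. Evaluating at the origin gives
\[
|g(0)| = \prod_n |z_n| \cdot e^{-\mu(\partial\D)} .
\]
Hence $|g(0)|=\prod_n|z_n|$ holds if and only if $\mu=0$, that is, if and only if $g$ is a Blaschke product. If instead $g$ has a zero of order $k$ at $0$, we apply the same reasoning to the inner function $g(z)/z^k$. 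Its value at $0$ is the $k$-th Taylor coefficient $\widehat g(k)$, and its zeros are those of $g$ in $\D\setminus\{0\}$. Thus $g$ is a Blaschke product if and only if $|\widehat g(k)|=\prod_{z\in g^{-1}(0)\setminus\{0\}}|z|$.

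\emph{Step 2 (reduction to the maps $T_a$).} Every $T\in\Aut(\D)$ can be written as $T=\lambda T_a$ with $|\lambda|=1$ and $a\in\D$. Multiplying by a unimodular constant does not affect being a Blaschke product. Therefore $f$ is an indestructible Blaschke product if and only if $g_a:=T_a\circ f$ is a Blaschke product for every $a\in\D$. Each $g_a$ is inner, being the composition of an automorphism with an inner function.

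\emph{Step 3 (the case $a\neq f(0)$).} Here
\[
g_a(0)=\frac{a-f(0)}{1-\overline a f(0)}\neq 0 ,
\]
and the zero multiset of $g_a$ is exactly $f^{-1}(a)$. By Step 1, $g_a$ is a Blaschke product if and only if \eqref{eq:MLConditionForNonzeros} holds.

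\emph{Step 4 (the case $a=f(0)$).} Here $g_a = T_{f(0)}\circ f$ vanishes at $0$ to order exactly $k$, and its zero multiset is $Z_f(0)$. Expanding $T_w$ near $w$ via $T_w'(w)=-1/(1-|w|^2)$, the $k$-th coefficient of $g_a$ is
\[
-\frac{\widehat f(k)}{1-|f(0)|^2}.
\]
By the zero-at-origin version of Step 1, $g_a$ is a Blaschke product if and only if \eqref{eq:MLConditionForZeros} holds. Combining Steps 2–4 gives the equivalence.

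\emph{Main obstacle.} The only point needing care is Step 1, which is the standard fact that the singular factor contributes exactly $e^{-\mu(\partial\D)}$ to $|g(0)|$. Equivalently, a Blaschke product $B$ satisfies $|B(0)|=\prod|z_n|$, while $|S_\mu(0)|=e^{-\|\mu\|}<1$ whenever $\mu\neq 0$. Beyond that, one only has to keep track of multiplicities, which the multiset convention handles.
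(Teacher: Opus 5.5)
Your proof is correct and complete. The paper itself gives no proof of this statement. It quotes McLaughlin's theorem, which was originally formulated for Blaschke products $f$, and cites Remark~2.2 of \cite{krausCompositionDecompositionIndestructible2013} for the observation that the hypothesis can be weakened to $f$ inner.

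Your route runs as follows:
\begin{enumerate}
\item Factor each $T_a\circ f$ canonically.
\item Use $|S_\mu(0)|=e^{-\mu(\partial\D)}$ to see that $|g(0)|=\prod|z_n|$, or $|\widehat g(k)|=\prod_{z\in g^{-1}(0)\setminus\{0\}}|z|$ in the vanishing case, holds exactly when the singular factor is trivial.
\item Compute the leading coefficient of $T_{f(0)}\circ f$ as $-\widehat f(k)/(1-|f(0)|^2)$.
\end{enumerate}
This is the standard Frostman-type argument. Writing every automorphism as $\lambda T_a$ also covers $T=\mathrm{id}=-T_0$, so the requirement that $f$ itself be a Blaschke product is absorbed into the case $a=0$.

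The main gain of your approach is that it is self-contained and never assumes a priori that $f$ is a Blaschke product. Only innerness of $T_a\circ f$ and uniqueness of the canonical factorization are used. It therefore proves the inner-function version directly. That is exactly what the paper needs in the proof of Theorem~\ref{thm:IterationOfIBP}, implication (a)$\Rightarrow$(b), where the limit $B$ is known to be inner only through Ferreira's stability theorem.
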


\begin{remark}In~\cite[Theorem 1]{mclaughlinExceptionalSetsInner1972}, $f$ was assumed to be a Blaschke product; it was observed in~\cite[Remark~2.2]{krausCompositionDecompositionIndestructible2013} that this assumption can be weakened to $f$ being an inner function. This refinement is essential for establishing the implication~\ref{item:IBPsequence}~\(\Longrightarrow\)~\ref{item:IBPlimit} of Theorem~\ref{thm:IterationOfIBP}.
\end{remark}

\begin{proof}[Proof of Theorem~\ref{thm:IterationOfIBP}~\ref{item:IBPsequence}~$\Longrightarrow$~\ref{item:IBPlimit}]
Since each $b_n$ is indestructible, Theorem~1.2 in~\cite{krausCompositionDecompositionIndestructible2013} implies that each
\[
B_n := b_n \circ \dots \circ b_1
\]
is an indestructible Blaschke product, and hence satisfies~\eqref{eq:MLConditionForNonzeros} and~\eqref{eq:MLConditionForZeros}.  
By Theorem~1.1 in~\cite{ferreiraNoteForwardIteration2023}, the limit function $B$ is an inner function, so Theorem~\ref{thm:CharacterizationOfIBP} applies to $f = B$.

\medskip

\noindent \emph{Step~1: Verification of~\eqref{eq:MLConditionForNonzeros}.}
Let $a \in \mathbb{D} \setminus \{ B(0) \}$.  
Since indestructible Blaschke products are surjective, each $B_n: \D \to \D$ is onto, and  Proposition~\ref{prop:Surjective}  implies that $B: \D \to \D$ is onto.  
Hence, there exists $c \in \mathbb{D}$ with $B(c) = a$. Since $B_n(c)\not=B_n(0)$  for every $n$ sufficiently large, the only if part of Theorem \ref{thm:CharacterizationOfIBP} shows
$$ \prod_{z \in Z_{B_n}(c)} |z|
 = \left| \frac{B_n(0) - B_n(c)}{1 - \overline{B_n(0)} B_n(c)} \right| . $$
Applying Lemma~\ref{lem:ZeroSetInclusion}, we obtain
\[
\prod_{z \in Z_B(c)} |z|
 = \lim_{n \to \infty} \prod_{z \in Z_{B_n}(c)} |z|
 = \lim_{n \to \infty} \left| \frac{B_n(0) - B_n(c)}{1 - \overline{B_n(0)} B_n(c)} \right|
 = \left| \frac{B(0) - a}{1 - \overline{B(0)} a} \right|.
\]

\medskip

\noindent\emph{Step~2: Verification of~\eqref{eq:MLConditionForZeros}.}

Let $K \ge 1$ denote the order of the zero of $B-B(0)$ at $0$. Then  Lemma~\ref{lem:OrderOfZeroEventuallyConstant} shows that $B_n-B_n(0)$ has a zero of order $K$ at $0$ for sufficiently large $n$.
Combining this with \eqref{eq:ProductLimit}  and the validity of~\eqref{eq:MLConditionForZeros} for each $B_n$,  $n$  sufficiently large, we find
\[
\frac{|\widehat{B}(K)|}{1 - |B(0)|^2}
 = \lim_{n \to \infty} \frac{|\widehat{B_n}(K)|}{1 - |B_n(0)|^2}
 = \lim_{n \to \infty} \prod_{z \in Z_{B_n}(0) \setminus \{0\}} |z|
 = \prod_{z \in Z_B(0) \setminus \{0\}} |z|.
\]

Hence $B$ satisfies both conditions of Theorem~\ref{thm:CharacterizationOfIBP} and is therefore an indestructible Blaschke product.
\end{proof}

	\section{Maximal Blaschke products -- Proof of Theorems~\ref{thm:IterationOfMBP} and \ref{thm:MBPdecomposition} }\label{sec:proofMBP}


We begin with a brief discussion of maximal Blaschke products and some of their basic properties, which will be needed later. For further details, see, for instance, \cite{krausCriticalPointsGauss2013,krausMaximalBlaschkeProducts2013}.

\medskip

Let $f : \D \to \C$ be holomorphic. We denote by $\mathscr{C}_f$ the \emph{critical set} of~$f$, that is, the multiset of its critical points counted with multiplicities. For $f,g \in \mathcal{H}$, we write $\mathscr{C}_f \subseteq \mathscr{C}_g$ if each critical point of~$f$ is also a critical point of~$g$ with at least the same multiplicity. Postcomposition with automorphisms of the disk preserves critical sets, that is,
\[
  \mathscr{C}_{T \circ f} = \mathscr{C}_f, \qquad T \in \Aut(\D).
\]

Let $\mathscr{C} \subseteq \D$ be the critical set of a nonconstant function in~$\mathcal{H}$. A function $F \in \mathcal{H}$ is said to be \emph{maximal for~$\mathscr{C}$} if $\mathscr{C}_F = \mathscr{C}$ and the Schwarz--Pick type inequality
\begin{equation} \label{eq:SchwarzPickMBP}
  \frac{|f'(z)|}{1 - |f(z)|^2} \le \frac{|F'(z)|}{1 - |F(z)|^2}, \qquad z \in \D,
\end{equation}
holds for all $f \in \mathcal{H}$ whose critical set contains~$\mathscr{C}$.

\medskip

The following properties are fundamental.

\begin{enumerate}[label=(P\arabic*), ref=P\arabic*]
\item \textit{Conformal invariance}\label{item:MBP1}\\
If $F \in \mathcal{H}$ is maximal for~$\mathscr{C}$ and $T \in \Aut(\D)$, then $T \circ F$ is also maximal for~$\mathscr{C}$.

\item \textit{Uniqueness up to automorphisms}\label{item:MBP2}\\
If $F, F_* \in \mathcal{H}$ are maximal for~$\mathscr{C}$, then $F_* = T \circ F$ for some $T \in \Aut(\D)$; see~\cite[Theorem~2.5]{krausMaximalBlaschkeProducts2013}.

\item \textit{Existence}\label{item:MaximalExistence}\\
A maximal function for~$\mathscr{C}$ exists; see Heins~\cite[Theorem~13.1]{Heins1962} and also~\cite[Theorem~2.2(a)]{krausMaximalBlaschkeProducts2013}.

\item \textit{Finite critical sets and finite Blaschke products} \label{item:FiniteCriticalSet}\\
If $\mathscr{C}$ is finite with $N$ points, then every maximal function for~$\mathscr{C}$ is a finite Blaschke product of degree $N + 1$. Conversely, every finite Blaschke product is maximal; see Nehari~\cite{Nehari1947} and Heins~\cite{Heins1962}.

\item \textit{Maximal functions are Blaschke products} \label{item:kraus}\\
Every maximal function for $\mathcal{C}$ is a Blaschke product; see \cite[Theorem~1.2]{krausCriticalSetsBounded2013}.
\end{enumerate}

In summary, if $\mathscr{C}$ is the critical set of a nonconstant holomorphic self-map of~$\D$, then there exists~a Blaschke product $B$ whose critical set is precisely $\mathscr{C}$ and such that
\begin{equation} \label{eq:SchwarzPickMBPDef}
  \frac{|f'(z)|}{1 - |f(z)|^2} \le \frac{|B'(z)|}{1 - |B(z)|^2}, \qquad z \in \D,
\end{equation}
for every $f \in \mathcal{H}$ satisfying $\mathscr{C}_f \supseteq \mathscr{C}$.  
Such a Blaschke product is unique up to postcomposition with an automorphism of~$\D$ and is called \textbf{maximal Blaschke product for~$\mathscrbf{C}$}. In particular, every maximal Blaschke product is maximal for its own critical set.

\medskip

An immediate consequence of property~\hyperref[item:FiniteCriticalSet]{(P1)}
and \hyperref[item:kraus]{(P5)} is that every maximal Blaschke product is indestructible.
Moreover, property~\hyperref[item:FiniteCriticalSet]{(P4)} characterizes the finite Blaschke products as precisely those maximal Blaschke products whose set of critical points is finite.
\medskip


For our proof of Theorem~\ref{thm:MBPdecomposition}, the defining inequality~\eqref{eq:SchwarzPickMBPDef} for a maximal Blaschke product is not, by itself, sufficient. Our approach is based on the  insight that the quantity
\[
  \frac{|f'(z)|}{1 - |f(z)|^{2}}
\]
appearing in~\eqref{eq:SchwarzPickMBPDef} may be replaced by any conformal pseudometric $\lambda(z)\,|dz|$ on~$\D$ whose curvature satisfies an appropriate upper bound and whose zero set contains the critical set of $B$. This observation, originating in Ahlfors’ extension of the Schwarz--Pick lemma~\cite{Ahlfors1938} and developed further by Nehari \cite{Nehari1947} and Heins~\cite{Heins1962}, is the source of the additional flexibility that enables the arguments to follow. The key ingredient in the proof of Theorem~\ref{thm:MBPdecomposition} is the following extended maximality property of maximal Blaschke products. Recall that $\lambda(z)\,|dz|$ is said to be a conformal pseudometric on~$\D$ with zero set
\[
\mathscr{C} = \{\underbrace{z_1,\dots,z_1}_{m_1\text{-times}}, \underbrace{z_2,\dots,z_2}_{m_2\text{-times}}, \dots\}
\]
and curvature $\kappa_\lambda \le -4$, if
\begin{enumerate}
\item $\lambda$ is a nonnegative and twice continuously differentiable function on $\D \setminus \mathscr{C}$,  
\item $\displaystyle \limsup_{z \to z_j} \frac{\lambda(z)}{|z - z_j|^{m_j}} < \infty$ for each $j \in \mathbb{N}$,  
\item and
  $$ \kappa_\lambda(z) := -\frac{\Delta (\log \lambda)(z)}{\lambda(z)^2} \le -4$$ for $z \in \D \setminus \mathscr{C}$.
\end{enumerate}

\begin{theoremliterature}[{\cite[Cor.~1.5]{krausCriticalSetsBounded2013}} and {\cite[Th.~1.1]{krausMaximalBlaschkeProducts2013}}]
\label{th:NehariSchwarz}
Let $\mathscr{C}$ be the critical set of a nonconstant function in $\mathcal{H}$ and $B$  a maximal Blaschke product for $\mathscr{C}$.  
Suppose that $\lambda(z)\,|dz|$ is a conformal pseudometric on $\D$ with zero set $\mathscr{C}$ and curvature $\kappa_\lambda \le -4$.
Then
\[
\lambda(z) \le \frac{|B'(z)|}{1 - |B(z)|^2}, \quad z \in \D.
\]
\end{theoremliterature}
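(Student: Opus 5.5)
The approach is Perron's method for conformal pseudometrics, as in Ahlfors and Heins. The idea is to take the largest pseudometric in the admissible class, show it has constant curvature $-4$ and is therefore induced by a function in $\mathcal H$, and then use the defining inequality \eqref{eq:SchwarzPickMBPDef} of $B$.

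Let $\Phi$ be the family of all conformal pseudometrics $\lambda(z)\,|dz|$ on $\D$ with zero set containing $\mathscr{C}$, in the sense of (1)--(2) with at least the multiplicities $m_j$, and with curvature $\le -4$. Enlarging the class to "containing $\mathscr{C}$" only makes the theorem stronger. We pass to upper semicontinuous SK-metrics, i.e.\ curvature $\le -4$ in the sense of the generalized Laplacian, so that $\Phi$ is closed under pairwise maxima. The family is nonempty: by the chain rule $\lambda_B := |B'|/(1-|B|^2)$ belongs to $\Phi$, since it is the pull-back of the hyperbolic metric of curvature $-4$.

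The first step is a uniform bound. By Ahlfors' lemma, every $\lambda \in \Phi$ satisfies $\lambda \le 1/(1-|z|^2)$, since its curvature is $\le -4$. Hence
\[
\lambda_{\max}(z) := \sup_{\lambda\in\Phi} \lambda(z)
\]
is finite, and we let $\lambda_{\max}^*$ denote its upper semicontinuous regularization.

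The second step is the Perron modification. Fix a closed disk $K \subset \D \setminus \mathscr{C}$. Replace $\lambda$ on $K$ by the solution $u$ of the Liouville equation $\Delta \log u = 4u^2$ in the interior of $K$ with boundary values $\lambda|_{\partial K}$. The gluing lemma for SK-metrics shows that the modified metric still lies in $\Phi$ and dominates $\lambda$. The standard Perron argument, using local compactness of solutions of Liouville's equation, then shows the following:
\begin{itemize}
\item $\lambda_{\max}$ belongs to $\Phi$;
\item it is $C^2$ on $\D \setminus \mathscr{C}$;
\item it has curvature identically $-4$ there.
\end{itemize}
For these steps one needs an a priori interior estimate for solutions of $\Delta\log u = 4u^2$. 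This follows from the uniform bound $u \le 1/(1-|z|^2)$ together with elliptic regularity.

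The main obstacle is the behaviour of $\lambda_{\max}$ at the points of $\mathscr{C}$. We must show that $\lambda_{\max}$ vanishes at $z_j$ to order exactly $m_j$, so that it belongs to the class with zero set $\mathscr{C}$ precisely.
\begin{itemize}
\item \emph{Lower bound on the order.} Since $\lambda_{\max} \ge \lambda_B$ and $\lambda_B$ has zeros of order exactly $m_j$, the order of $\lambda_{\max}$ at $z_j$ is at most $m_j$.
\item \emph{Upper bound on the order.} On a small disk $|z-z_j|<r$, each $\lambda \in \Phi$ makes $\log\bigl(\lambda/|z-z_j|^{m_j}\bigr)$ subharmonic: the curvature condition gives $\Delta\log\lambda \ge 0$, and condition (2) shows the singularity at $z_j$ is removable. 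By the maximum principle and the uniform Ahlfors bound on $|z-z_j| = r$, we get $\lambda(z) \le C\,|z-z_j|^{m_j}$ with $C$ independent of $\lambda$. This uniform bound passes to the supremum.
\end{itemize}

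Next we identify $\lambda_{\max}$. By Liouville's theorem, a metric of constant curvature $-4$ on the simply connected domain $\D$, with isolated zeros of integer orders, is of the form $|f'|/(1-|f|^2)$ for a locally univalent-off-zeros developing map $f \colon \D \to \D$. The usual developing-map construction applies: local solutions of $\Delta \log u = 4u^2$ are pull-backs of the hyperbolic metric, the integer-order zeros correspond to critical points, and simple connectivity allows analytic continuation. Thus $f \in \mathcal{H}$ with $\mathscr{C}_f = \mathscr{C}$.

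Finally, the defining inequality \eqref{eq:SchwarzPickMBPDef} of the maximal Blaschke product $B$ gives $\lambda_{\max} = |f'|/(1-|f|^2) \le \lambda_B$. Combined with $\lambda_B \le \lambda_{\max}$, this yields $\lambda_{\max} = \lambda_B$. Hence every admissible $\lambda$ satisfies
\[
\lambda(z) \le \lambda_{\max}(z) = \frac{|B'(z)|}{1-|B(z)|^2}, \qquad z \in \D,
\]
which is the assertion.
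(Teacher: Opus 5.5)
Your argument is correct and follows essentially the same Heins--Perron route as the cited results of Kraus--Roth; the paper itself only quotes the statement and gives no proof. That route is to construct the maximal SK-metric with zero set $\mathscr C$ by Perron's method, identify it via Liouville's theorem as $|f'|/(1-|f|^2)$ with $\mathscr C_f=\mathscr C$, and then compare with $B$ through \eqref{eq:SchwarzPickMBPDef}. The one step you compress is the local analysis at the points of $\mathscr C$: regularity of $\lambda_{\max}$ from $c|z-z_j|^{m_j}\le\lambda_{\max}\le C|z-z_j|^{m_j}$, and triviality of the developing map's monodromy there because the order is an integer. This is standard and does hold.
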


\begin{remark} \label{rem:ConstantCurvature}
   If $f \in \mathcal{H}$, then the quantity
\[
\frac{|f'(z)|}{1 - |f(z)|^2}\, |dz|
\]
defines a conformal pseudometric on $\D$ with zero set $\mathscr{C}_f$ and constant curvature $-4$, that is,
\[
\kappa_\lambda(z) = -4 \quad \text{for } z \in \D \setminus \mathscr{C}_f.
\]
\end{remark}
  
\begin{proof}[Proof of Theorem \ref{thm:MBPdecomposition}]
In view of Remark \ref{rem:MBPdecomposition}, we only need to prove that if $A=A_1 \circ A_2$ is a maximal Blaschke product, then so is $A_2$. Suppose $A=A_1 \circ A_2$ is a maximal Blaschke product.
Obviously, $A_2$ is a nonconstant function in $\mathcal{H}$.
Therefore, in view of property \hyperref[item:MaximalExistence]{(P3)} and property \hyperref[item:kraus]{(P5)}, there is~a maximal Blaschke product $B$ for $\mathscr{C}_{A_2}$. In particular,
\begin{equation}\label{eq:inequal_1}
\frac{|A_2'(z)|}{1-|A_2(z)|^2} \le  \frac{|B'(z)|}{1-|B(z)|^2}, \quad z \in
\D \, .
  \end{equation}
 For convenience, we use the short-hand notation
  \begin{equation*}
\lambda_A(z)=\frac{|A'(z)|}{1-|A(z)|^2}, \quad
\lambda_{A_2}=\frac{|A_2'(z)|}{1-|A_2(z)|^2} \quad \text{and} \quad \lambda_{B}(z)= \frac{|B'(z)|}{1-|B(z)|^2} \, .
    \end{equation*}
 Since $\mathscr{C}_{A_2}=\mathscr{C}_{B}$, which means that $A_2'$ and $B'$ have the same zeros (counting multiplicities),  the function $z
\mapsto \lambda_{B}(z)/\lambda_{A_2}(z)$, which is originally defined  on $\D \setminus \mathscr{C}_{A_2}$,  has a continuous extension to~$\D$, and
 \eqref{eq:inequal_1} shows
\begin{equation}\label{eq:inequal_2}
  1 \le  \frac{\lambda_{B}(z)}{\lambda_{A_2}(z)}< \infty \quad \text{for } z \in \D\,.
\end{equation}
In order to  establish that $A_2$ is a maximal Blaschke product we aim to prove the reverse inequality $\lambda_B \le \lambda_{A_2}$.  The main idea is to introduce the auxiliary function
$$\sigma(z):=\lambda_A(z) \cdot \frac{\lambda_{B}(z)}{\lambda_{A_2}(z)}, \qquad z \in \D \, . $$
Then, in view of \eqref{eq:inequal_2}, $\sigma(z) \, |dz|$ is a conformal pseudometric on $\D$ with zero set $\mathscr{C}_A$. We proceed to show that $\kappa_\sigma \le -4$ on $\D \setminus \mathscr{C}_A$.  
Taking into account the definition of curvature and Remark \ref{rem:ConstantCurvature}, we have
  $$ \Delta (\log \sigma) =\Delta (\log \lambda_A)+\Delta (\log
  \lambda_B)-\Delta (\log \lambda_{A_2})=4 \lambda_A^2+4
  \lambda_B^2-4\lambda_{A_2}^2.$$
  Therefore
\begin{equation} \label{eq:sigmaCurvature}
\begin{split}
\kappa_{\sigma}(z)=-\frac{\Delta (\log \sigma)(z)}{\sigma(z)^2}&=-4\left(  \left(\frac{\lambda_{A_2}(z)}{\lambda_{B}(z)}
  \right)^2 +\left(\frac{\lambda_{A_2}(z)}{\lambda_{A}(z)} \right)^2-
                                                               \left(\frac{\lambda_{A_2}(z)}{\lambda_{A}(z)}
                                                               \right)^2
                                                               \left(\frac{\lambda_{A_2}(z)}{\lambda_{B}(z)}
                                                               \right)^2
                                                               \right)\\&=-4
  \left(  \left(\frac{\lambda_{A_2}(z)}{\lambda_{B}(z)}\right)^2 +
  \left(\frac{\lambda_{A_2}(z)}{\lambda_{A}(z)} \right)^2  \left(1-
  \left(\frac{\lambda_{A_2}(z)}{\lambda_{B}(z)}\right)^2 \right)  \right) \, .
\end{split}
\end{equation}
In order to estimate $\kappa_\sigma$ on $\D \setminus \mathscr{C}_A$ from above, we notice that
\begin{equation}\label{eq:curv_2}
\frac{\lambda_{A_2}(z)}{\lambda_A(z)}\ge 1 \quad \text{for } z \in \D
\backslash \mathscr{C}_A\,.
\end{equation}
This follows at once from the Schwarz--Pick lemma applied to  $A_1\in \mathcal{H}$, which leads to
$$ \lambda_A(z)=\frac{|A'(z)|}{1-|A(z)|^2}=\frac{|A_1'(A_2(z))| \cdot |A_2'(z)|}{1-|A_1(A_2(z))|^2}\le
\frac{ |A_2'(z)|}{1-|A_2(z)|^2}= \lambda_{A_2}(z)\, , \quad z \in \D\, . $$
Combining \eqref{eq:inequal_2} in the form $1-(\lambda_{A_2}/\lambda_B)^2 \ge 0$ and \eqref{eq:curv_2}, we see that the right-hand side of \eqref{eq:sigmaCurvature} is bounded above by $-4$. We are therefore in a position to apply Theorem \ref{th:NehariSchwarz} to the conformal pseudometric $\sigma(z) \, |dz|$, which leads to 
  \begin{equation*}
\lambda_A(z) \cdot \frac{\lambda_B(z)}{\lambda_{A_2}(z)}=\sigma(z) \le \frac{|A'(z)|}{1-|A(z)|^2}=\lambda_A(z), \quad z \in \D. 
\end{equation*}
Thus $\lambda_{B}(z) \le \lambda_{A_2}(z) $ for $z \in \D$. 
Combining this with \eqref{eq:inequal_1} gives
\begin{equation*}
\frac{|A_2'(z)|}{1-|A_2(z)|^2}=\frac{|B'(z)|}{1-|B(z)|^2} \quad
\text{for } z \in \D\, \, . 
\end{equation*}
Therefore, $A_2$ is maximal for $\mathcal{C}_{A_2}$, and hence it is a maximal Blaschke product by  property \hyperref[item:kraus]{(P5)}.
\end{proof}

\begin{proof}[Proof of Theorem \ref{thm:IterationOfMBP}]

Having Theorem \ref{thm:MBPdecomposition} at hand, the proof of the implication \ref{item:MBPlimit}~$\implies$~\ref{item:MBPsequence} follows by the same argument as in the case of indestructible Blaschke products, see the proof of Theorem \ref{thm:IterationOfIBP} \ref{item:IBPlimit} $\Longrightarrow$ \ref{item:IBPsequence}.

\medskip

	It remains to prove \ref{item:MBPsequence} $\implies$ \ref{item:MBPlimit}. We denote by~$\mathscr{C}=\mathscr{C}_B$ the set of critical points of~$B$. We wish to show that~$B$ is a maximal Blaschke product for~$\mathscr{C}$. In view of our preliminary discussion, see  \hyperref[item:MaximalExistence]{(P3)} and  \hyperref[item:kraus]{(P5)},  there exists a maximal Blaschke product~$M$ for $\mathscr{C}$. In particular,
	\begin{equation}
		\frac{|B'(z)|}{1-|B(z)|^2}\leq	\frac{|M'(z)|}{1-|M(z)|^2},\quad z\in\D.
	\end{equation}
	Further, denote~$\mathscr{C}_n=\mathscr{C}_{B_n}$ for all $n\in\N$. In view of Theorem \ref{thm:MBPdecomposition} \ref{item:MBPComposition}~$\Longrightarrow$~\ref{item:MBPComposite},  the function $B_n$ is a maximal Blaschke product for~$\mathscr{C}_n$, and clearly $\mathscr{C}_n\subseteq\mathscr{C}$ for all $n\in\N$. Together, this implies by \eqref{eq:SchwarzPickMBPDef}
	\begin{equation}
		\frac{|M'(z)|}{1-|M(z)|^2}\leq	\frac{|B_n'(z)|}{1-|B_n(z)|^2},\quad z\in\D
	\end{equation}
	for every $n\in\N$.  \hide{Moreover, in view of $\mathscr{C}_{n}\subseteq \mathscr{C}_{n+1}$, again we see from \eqref{eq:SchwarzPickMBP} that
$$        	\frac{|B_{n+1}'(z)|}{1-|B_{n+1}(z)|^2}\le 	\frac{|B_n'(z)|}{1-|B_n(z)|^2}, \quad z \in \D\,  .$$
The last two displayed formulas imply that the limit
$$\lim_{n\to\infty}\frac{|B_n'(z)|}{1-|B_n(z)|^2} \, $$
exists, and} Since $B_n \to B$ locally uniformly in $\D$ and hence $B_n' \to B'$, we deduce that
	\begin{equation}
		\frac{|B'(z)|}{1-|B(z)|^2}\leq	\frac{|M'(z)|}{1-|M(z)|^2}\leq\lim_{n\to\infty}\frac{|B_n'(z)|}{1-|B_n(z)|^2}=\frac{|B'(z)|}{1-|B(z)|^2},\quad z\in\D \, . 
              \end{equation}
Hence
$B$ is a maximal Blaschke product by properties \hyperref[item:MBP1]{(P1)}, \hyperref[item:MBP2]{(P2)} and \hyperref[item:kraus]{(P5)}.
This concludes the proof of Theorem~\ref{thm:IterationOfMBP}.
        \end{proof}

\section{Proof of Theorem~\ref{thm:BeniniEtAl}} \label{sec:TheoremA}

\noindent
By hypothesis,  the forward iterates $F_n$ of $(f_n)$ converge to a limit $F \in \mathcal{H}$.
Define
\[
\lambda_n(z) := \frac{|F_n'(z)|}{1 - |F_n(z)|^2}, \qquad z \in \D.
\]
We first  assume $f_n(0) = 0$ for each $n \in \N$.

\medskip

By the Schwarz--Pick lemma applied to $f_{n+1} \in \mathcal{H}$, we have
\[
  \lambda_{n+1}(z) = \frac{|f_{n+1}'(F_n(z))|}{1 - |f_{n+1}(F_n(z))|^2} \, |F_n'(z)| \le \frac{|F_n'(z)|}{1 - |F_n(z)|^2}=
  \lambda_n(z), \quad z \in \D.
\]
Hence $(\lambda_n)$ is a pointwise nonincreasing sequence of nonnegative functions, and thus converges to
\[
 \frac{|F'|}{1 - |F|^2},
\]
where  either $F$ is identically zero or $F$ has a zero of order $K \in \N$ at $z=0$.

\medskip

In the latter case, since $F$ is the limit of the forward iterates of $(f_n)$, there exists $N \in \N$ such that $f_n'(0) \neq 0$ for all $n \ge N$, and we can write
\[
F = H_N \circ F_N
\]
for some nonconstant $H_N \in \mathcal{H}$ (see Remark~\ref{rem:factorization}), satisfying
\[
\prod_{j=N+1}^{\infty} |f_j'(0)| = |H_N'(0)| \neq 0.
\]
Consequently,
\begin{equation} \label{eq:BlaschkeCondition}
\sum_{j=1}^{\infty} (1 - |f_j'(0)|) < \infty.
\end{equation}

\medskip

In the first case, $F$ is identically zero, condition~\eqref{eq:BlaschkeCondition} cannot hold: it would imply that for some $M \in \N$ there exists $\varepsilon > 0$ such that
\[
\prod_{j=M+1}^{\infty} |f_j'(0)| \ge \varepsilon.
\]

As in Remark \ref{rem:factorization}, setting  $h_{M,n}:=f_{M+n} \circ \cdots \circ f_{M+1} \in \mathcal{H}$, we would have $F_{M+n}=h_{M,n} \circ F_M$ with $|h_{M,n}'(0)|\ge \eps$ for each $n \ge 1$.
This allows us to write $F = H_M \circ F_M$ with $H_M \in \mathcal{H}$ and $|H_M'(0)| \ge \varepsilon$, contradicting $F_M \not\equiv 0$.

\medskip

Finally, we indicate how to deal with the general case when $f_n(0)$ is not necessarily $0$. Fix $a \in \D$ and consider the sequence
  $(\tilde{f}_n)$ defined by \eqref{eq:Normalization} and its forward iterates $\tilde{F}_n$. Note that $\tilde{f}_n(0)=0$. Then $F$ is nonconstant if and only if $(\tilde{F}_n)$ converges to a nonconstant limit $\tilde{F}$. In view of 
$$ \frac{1-D_h\tilde{f}_n(0)}{1-D_h f_n(a)}=\frac{1-D_hf_n(F_{n-1}(a))}{1-D_h f_n(a)},$$
the   estimate  \cite[(3.4)]{GumenyukKourouMouchaRoth2025}
$$ e^{-2 \dD(a,F_{n-1}(a))} \le \frac{1-D_hf_n(F_{n-1}(a))}{1-D_h f_n(a)} \le e^{2 \dD(a,F_{n-1}(a))} , $$
where $\dD$ denotes the hyperbolic distance, 
  and the assumption that $(F_n)$ converges to an element in~$\mathcal{H}$, 
  we see  that the two series $\sum (1-D_h f_n(a))$ and $\sum (1-D_h\tilde{f}_n(0))$ are equisummable.
Hence, the general case is reduced to the special case $f_n(0)=0$, and this completes the proof of Theorem~\ref{thm:BeniniEtAl}.

\medskip

\noindent
\textbf{Acknowledgement.} We are grateful to the referees for their careful reading of the manuscript and insightful suggestions.

\end{document}